\tikzset{
>=stealth,
every picture/.style={thick},
graphs/every graph/.style={empty nodes},
}
\tikzstyle{vertex}=[
\tikzstyle{printersafe}=[decoration={snake,amplitude=0pt}]
\newcommand{\ord}{\operatorname{ord}}
\newcommand{\vol}{\operatorname{vol}}
\def\O#1.{\mathcal {O}_{#1}}
\def\pr #1.{\mathbb P^{#1}}
\def\af #1.{\mathbb A^{#1}}
\def\ses#1.#2.#3.{0\to #1\to #2\to #3 \to 0}
\def\xrar#1.{\xrightarrow{#1}}
\def\K#1.{K_{#1}}
\def\bA#1.{\mathbf{A}_{#1}}
\def\bM#1.{\mathbf{M}_{#1}}
\def\bL#1.{\mathbf{L}_{#1}}
\def\bB#1.{\mathbf{B}_{#1}}
\def\bK#1.{\mathbf{K}_{#1}}
\def\subs#1.{_{#1}}
\def\sups#1.{^{#1}}
\newtheorem{theorem}{Theorem}[section]
\newtheorem{lemma}[theorem]{Lemma}
\newtheorem{proposition}[theorem]{Proposition}
\newtheorem{corollary}[theorem]{Corollary}
\theoremstyle{definition}
\newtheorem{definition}[theorem]{Definition}
\newtheorem{example}[theorem]{Example}
\theoremstyle{remark}
\numberwithin{equation}{section}
\newcounter{rownumber}[figure]
\newcounter{rownumber-irr}[figure]
\newcounter{rownumber-p1}[figure]
\begin{document}

\title{K-stability of Fano threefolds of rank $4$ and degree $24$}
\begin{abstract}
We prove that all smooth Fano threefolds of rank $4$ and degree $24$ are K-stable.
\end{abstract}

\author[G.~Belousov]{Grigory Belousov}
\address{Bauman Moscow State Technical University, Moscow, Russia}
\email{belousov\_grigory@mail.ru}

\author[K.~Loginov]{Konstantin Loginov}
\address{Steklov Mathematical Institute of Russian Academy of Sciences, Moscow, Russia}
\email{loginov@mi-ras.ru}

\maketitle

\section{Introduction}

We work over the field of complex numbers. Three-dimensional smooth Fano varieties are known to belong to $105$ families. In \cite{ChAll}, the problem of characterising K-stable Fano threefolds was solved for a general element in each of these families. In particular, it was proven that a general smooth Fano threefolds with Picard rank $4$ and of degree $24$ is K-stable, see \cite[Lemma 4.3.2]{ChAll}. In this paper, we prove the following

\begin{theorem}
\label{main-theorem}
All smooth Fano threefolds with Picard rank $4$ and of degree $24$ are K-stable.
\end{theorem}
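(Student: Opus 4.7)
A smooth Fano threefold $X$ in the Mori--Mukai family 4.1 can be realized as a smooth divisor of multidegree $(1,1,1,1)$ in $Y:=(\pp^1)^4$. It carries two natural families of maps: the four projections to a single factor $\pi_i\colon X\to\pp^1$, whose fiber $F_i(t):=\pi_i^{-1}(t)$ is cut out on $Y\cap\{x_i=t\}\cong(\pp^1)^3$ by a $(1,1,1)$-form and is generically a smooth del Pezzo surface of degree $6$; and the four birational projections $p_j\colon X\to(\pp^1)^3$ forgetting the $j$-th factor, each realizing $X$ as the blow-up of a curve $C_j\subset(\pp^1)^3$ cut out by two $(1,1,1)$-forms. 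By the valuative criterion of Fujita--Li, it suffices to prove $\delta(X)>1$, and by its local form due to Abban--Zhuang it suffices to show $\delta_P(X)>1$ at every $P\in X$. Since the general member of the family is already K-stable by \cite[Lemma~4.3.2]{ChAll}, the remaining task is to handle the special smooth members uniformly.

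The plan is to apply the Abban--Zhuang inequality to an adaptively chosen flag $P\in C\subset S\subset X$, with $S:=F_i(\pi_i(P))$ the del Pezzo fiber of $\pi_i$ through $P$ (for an appropriate $i\in\{1,2,3,4\}$) and $C$ the fiber through $P$ of one of the three induced $\pp^1$-fibrations on $S$. One first computes $S_X(S)$ via the Zariski decomposition of $-K_X-uS$: since $S\sim\pi_i^*\oo_{\pp^1}(1)$ is nef but not ample, this decomposition is explicit and governed by the four extremal rays of $\overline{\mathrm{NE}}(X)$ coming from the contractions $p_j$. The refined quantities $S(W^S_{\bullet,\bullet};C)$ and $S(W^{S,C}_{\bullet,\bullet,\bullet};P)$ then reduce to pseudo-effective threshold computations on a del Pezzo surface of degree $6$, for which sharp bounds are well-known. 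Combining these estimates yields $\delta_P(X)>1$ provided the flag is suitably general at $P$.

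The principal obstacle is that for special smooth members this flag can degenerate: the fiber $F_i(\pi_i(P))$ may be singular (precisely when $\pi_i(P)$ is one of the finitely many base points where the pencil of $(1,1,1)$-divisors defining $X$ is non-transverse), two candidate fibers through $P$ may fail to meet transversally, or $P$ may lie on several of the exceptional divisors $E_j=p_j^{-1}(C_j)$. Since smooth $(1,1,1,1)$-divisors modulo $\mathrm{SL}_2^{\times 4}\rtimes S_4$ form a bounded moduli, these configurations admit an explicit finite stratification in terms of the incidence geometry of the four curves $C_1,\ldots,C_4\subset(\pp^1)^3$. For each stratum one either swaps the roles of the four projections so that an alternative flag becomes admissible, or exploits additional automorphisms $G\subset\operatorname{Aut}(X)$ forced by the degeneration and passes to the equivariant threshold $\delta_G(X)$. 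The main technical difficulty I anticipate is a case-by-case verification that for every smooth $X$ and every $P\in X$, at least one of the four naturally indexed flags produces the bound $\delta_P(X)>1$; this will amount to uniform control of $S_X(F_i)$ and its refinements over the full moduli, with the most delicate sub-case being the points $P$ simultaneously fixed by a large subgroup of $\operatorname{Aut}(X)$.
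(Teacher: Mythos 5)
Your overall strategy is the paper's: realize $X$ as a $(1,1,1,1)$-divisor, and run Abban--Zhuang on flags $P\in Z\subset Y\subset X$ with $Y$ a sextic del Pezzo fiber of some $\pi_i$ and $Z$ a curve in $Y$ through $P$, using the explicit Zariski decomposition of $-K_X-uY$ (whose negative part is supported on the exceptional divisor $S_{i,j,k}$). However, there is a genuine gap at the heart of your plan: you assert that it suffices to prove $\delta_P(X)>1$ at every point and that a suitably chosen flag will always deliver this. That is false for this family. When $P$ lies on the intersection of two $(-1)$-curves of a smooth fiber $Y$ (a codimension-two but nonempty locus present for \emph{every} smooth member, not only special ones), the flag computation with $Z$ a $(-1)$-curve gives exactly $S(W^{Y,Z}_{\bullet,\bullet,\bullet};P)\le 1$ and $S(W^{Y}_{\bullet,\bullet};Z)=\tfrac{15}{16}$, so the Abban--Zhuang inequality only yields $\delta_P(X)\ge 1$, and no permutation of the four projections improves this. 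The paper closes this case not by a better flag but by invoking the \emph{equality clauses} of the Abban--Zhuang statements (Propositions \ref{Van1} and \ref{Van2}): if some $E$ with $P\in C(E)$ had $\beta_X(E)\le 0$, equality would force $A(E)/S(E)=1/S_X(Y)$, contradicting $S_X(Y)=\tfrac{33}{48}$ from the divisorial-stability computation; and the possibility that $C(E)$ is a curve is excluded by geometric facts proved separately (a $(-1)$-curve through $P$ lies in a smooth fiber of some other projection by Proposition \ref{Sing1}; a curve not contained in any fiber meets general fibers at points with at most one $(-1)$-curve, since three $(-1)$-curves cannot concur on a smooth sextic del Pezzo). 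None of this appears in your proposal, and without it your argument cannot conclude K-stability, only K-semistability at best at such points.

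Two further points. First, your fallback for degenerate configurations via equivariant thresholds $\delta_G(X)$ is unsubstantiated: $\mathrm{Aut}(X)$ is finite here, you give no mechanism by which a degeneration ``forces'' useful automorphisms, and no reduction of K-stability to $\delta_G>1$ is spelled out; the paper needs nothing of the sort. Second, the singular-fiber analysis you defer is essential and nontrivial: one must know that singular fibers have exactly one $\mathbb{A}_1$ point with $\rho(Y)=3$ and three $(-1)$-curves (Proposition \ref{prop-du-val}), that at most one fiber through $P$ is singular at $P$ (Proposition \ref{St2}), and that if a $(-1)$-curve or the curve $S|_Y$ passes through $P$ one can switch to a smooth fiber (Propositions \ref{Sing1}, \ref{Sing2}); these statements, not a moduli stratification, are what make the case analysis at every point of every smooth member go through. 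As written, your proposal is a plausible plan that coincides with the paper's framework but omits the decisive argument in the only genuinely delicate case.
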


%Sketch of the proof.
%To prove this theorem, we apply Abban-Zhuang theory \cite{AZ20} in the form \cite[Section 1.7]{ChAll}.

\textbf{Acknowledgements.}
The authors thank Ivan Cheltsov for proposing the problem and for useful discussions. The second author is supported by Russian Science Foundation under grant 21-71-00112.

\section{Preliminary results}

Throughout the paper, we shall use the following notation. Let $X$ be a smooth divisor of type $(1,1,1,1)$ in $(\mathbb{P}^1)^4=\mathbb{P}^1\times\mathbb{P}^1\times\mathbb{P}^1\times\mathbb{P}^1$. Then $X$ is a Fano threefold with $\rho(X)=4$ and $(-K_X)^3=24$, and conversely, all such Fano threefolds are realised in this way. Let
\[
\pi_i\colon X\rightarrow\mathbb{P}^1, \quad \quad \quad \quad \quad \pi_{i, j}\colon X\to (\mathbb{P}^1)^2, \quad \quad \quad \quad \quad \pi_{i, j, k}\colon X\to (\mathbb{P}^1)^3
\]
be the projections to the product of factors of $(\mathbb{P}^1)^4$ with numbers $i$, $j$ and $k$, for $1\leq i<j<k\leq 4$. Note that the map $\mathrm{Pic}(\mathbb{P}^1)^4\to\mathrm{Pic}(X)$ is an isomorphism by Lefschetz theorem. If a divisor $D$ on $(\mathbb{P}^1)^4$ has type $(a_1, a_2, a_3, a_4)$ for $a_i\in\mathbb{Z}$, we will say that $D|_X$ also has type $(a_1, a_2, a_3, a_4)$. We start with the description of elementary geometric properties of $X$.

\begin{proposition}
\label{prop-cones}
\begin{enumerate}[label=(\alph*)]
\item
Each contraction $\pi_{i, j, k}$, for $1\leq i<j<k\leq 4$, is a blow up of a smooth elliptic curve $C=C_{i,j,k}$ on $(\mathbb{P}^1)^3$ which is an intersection of two divisors of type $(1,1,1)$. For an exceptional divisor $S=S_{i,j,k}$ of $\pi_{i, j, k}$, we have $S\simeq C\times \mathbb{P}^1$.

% is a smooth ruled surfaces over $C_i$, and has the type $()$

\item
Each contraction $\pi_{i, j}$ is a conic bundle (although, it is not extremal) whose discriminant curve is a smooth elliptic curve  of type $(2, 2)$ in $(\mathbb{P}^1)^2$.%\times\mathbb{P}^1$.

\item
Each contraction $\pi_i$ is a sextic del Pezzo fibration (although, it is not extremal).

\item
The nef cone of $X$ is generated by surfaces $Y_i$, for $i=1,2,3,4$, where $Y_i$ are fibers of the contractions $\pi_i\colon X\to \mathbb{P}^1$. Moreover, one has $Y_i \cdot l_{j,k,l} = 1$ if $\{i, j, k, l\}=\{1, 2, 3, 4\}$, and $0$ otherwise, where $l_{i,j,k}$ are fibers of birational contractions $\pi_{i, j, k}$ for $1\leq i<j<k\leq 4$.

\item
The Mori cone $\overline{\mathrm{NE}}(X)$ is generated by the curves $l_{i,j,k}$ for $1\leq i<j<k\leq 4$.%, where $l_{i,j,k}$ are fibers of birational contractions $\pi_{i, j, k}$.%\colon X\to (\mathbb{P}^1)^3$.

\item
The pseudo-effective cone (which coincides with the movable cone) of $X$ is generated by the surfaces $S_{i,j,k}$ which are exceptional for $\pi_{i, j, k}$, for $1\leq i<j<k\leq 4$, and the fibers $Y_i$ of the projections $\pi_i$. %Each $S_i$ is a ruled surface over an elliptic curve.
\end{enumerate}
\end{proposition}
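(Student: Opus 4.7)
The plan is to fix an explicit equation for $X$ as the vanishing locus of a multihomogeneous polynomial $F$ of type $(1,1,1,1)$ on $(\mathbb{P}^1)^4$, and to read the geometry of each projection directly from an appropriate regrouping of $F$ with respect to the variables projected away.

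For (a) I would group $F = A\, x_4 + B\, y_4$, with $A,B$ sections of $\mathcal{O}_{(\mathbb{P}^1)^3}(1,1,1)$. The scheme-theoretic fiber of $\pi_{1,2,3}$ over $p \in (\mathbb{P}^1)^3$ is a single reduced point where $(A(p),B(p)) \neq 0$ and the entire fourth $\mathbb{P}^1$ where $A(p)=B(p)=0$. So $\pi_{1,2,3}$ is the blow-up of $C_{1,2,3} := \{A=B=0\}$, and its exceptional divisor is realized as $C_{1,2,3} \times \mathbb{P}^1$ inside $X \subset (\mathbb{P}^1)^4$. Smoothness of $C_{1,2,3}$ falls out of smoothness of $X$ by a Jacobian computation along $S_{1,2,3}$, and the complete-intersection adjunction yields $\omega_{C_{1,2,3}} \cong \mathcal{O}_{C_{1,2,3}}$, so $C_{1,2,3}$ is elliptic. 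Parts (b) and (c) are analogous regroupings: for (b) write $F = \sum_{i,j} a_{ij}\, x_3^{1-i} y_3^i\, x_4^{1-j} y_4^j$ with $a_{ij}$ of type $(1,1)$ on $(\mathbb{P}^1)^2$, so the fiber of $\pi_{1,2}$ is a $(1,1)$-curve in $\mathbb{P}^1 \times \mathbb{P}^1$ that degenerates exactly along $\det(a_{ij})=0$, a $(2,2)$-divisor, which is smooth elliptic by the same smoothness plus adjunction argument. For (c) the fiber of $\pi_1$ over a generic point is a smooth $(1,1,1)$-divisor in $(\mathbb{P}^1)^3$, and a direct intersection-theoretic computation on $(\mathbb{P}^1)^3$ gives $(-K)^2 = 6$, identifying it as a sextic del Pezzo.

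For (d) and (e) everything is pinned down by intersection numbers. Set $H_i := \pi_i^*\mathcal{O}_{\mathbb{P}^1}(1)$; by Lefschetz the classes $\{H_1,\dots,H_4\}$ are a $\mathbb{Z}$-basis of $\operatorname{Pic}(X)$, and $Y_i = H_i$. The curve $l_{j,k,l}$ is contracted by $\pi_{j,k,l}$ and mapped isomorphically to $\mathbb{P}^1$ by the remaining $\pi_i$, so $Y_m \cdot l_{j,k,l}$ equals $1$ when $m$ is the unique element of $\{1,2,3,4\}\setminus\{j,k,l\}$ and $0$ otherwise. Thus $\{Y_i\}$ and $\{l_{j,k,l}\}$ form dual bases. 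Since each $Y_i$ is nef (pulled back from $\mathbb{P}^1$) and each $l_{j,k,l}$ is effective, writing a nef class as $D = \sum a_i Y_i$ and pairing with each $l_{j,k,l}$ forces $a_i \geq 0$; dually, every irreducible curve has nonnegative coordinates in $\{l_{j,k,l}\}$. This identifies $\operatorname{Nef}(X)$ and $\overline{\operatorname{NE}}(X)$ as the claimed simplicial cones.

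For (f) the blow-up formula $K_X = \pi_{i,j,k}^* K_{(\mathbb{P}^1)^3} + S_{i,j,k}$ combined with $-K_X = Y_1+Y_2+Y_3+Y_4$ yields $S_{i,j,k} = Y_i + Y_j + Y_k - Y_l$ in $\operatorname{Pic}(X)$, so the eight claimed generators are all effective. The real content of (f) is the reverse inclusion and the equality with the movable cone. My plan is to dualize: the pseudoeffective cone is dual to the closed cone of moving curves, and I would produce enough moving-curve classes pairing nonnegatively against the eight generators to cut out exactly the cone they span. Natural candidates are rulings inside the ruled surfaces $S_{i,j,k} \cong C_{i,j,k} \times \mathbb{P}^1$, general fibers of the conic bundles $\pi_{i,j}$ from part (b), and families of curves sweeping out the del Pezzo fibrations $\pi_i$. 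The step I expect to be hardest is verifying $\overline{\operatorname{Eff}}(X) = \overline{\operatorname{Mov}}(X)$: this means approaching each extremal ray $[S_{i,j,k}]$ from the interior of $\overline{\operatorname{Mov}}(X)$ by classes admitting movable representatives, which I would attempt via the 2-ray game at each facet of $\operatorname{Nef}(X)$ and by producing explicit pencils of effective divisors on $X$ in the relevant classes.
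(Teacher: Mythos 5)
Parts (a)--(e) of your plan are correct. For (a), (c), (d), (e) it is essentially the paper's own argument (the paper also regroups the equation as $x_1L+y_1L'=0$, identifies the blown-up curve as $\{L=L'=0\}$, and gets (d), (e) from the dual bases $\{Y_i\}$, $\{l_{j,k,l}\}$ and nef/Mori duality). For (b) your route is genuinely different: you read the discriminant off as the vanishing of the determinant of the $2\times 2$ matrix of $(1,1)$-forms, giving the $(2,2)$ class at once, whereas the paper identifies the discriminant as the isomorphic image under $\pi_{1,2}$ of the elliptic curve $C_{1,2,3}=D_1\cap D_2$, using that $C_{1,2,3}\sim -K_{D_1}$ meets each contracted $(-1)$-curve of $D_1\to(\mathbb{P}^1)^2$ once. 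Your version is more computational and gets smoothness of the discriminant from smoothness of $X$; just be aware this is slightly more than ``the same Jacobian argument'': you must check it locally at the node of a reducible fiber (the rank-$1$ locus), after noting that rank $0$ (a two-dimensional fiber) is excluded by smoothness of $X$ and that a $(1,1)$-form is never a double line. Your dual approach to the pseudo-effective part of (f) also works and is cleaner than you suggest: the general fibers of the six conic bundles $\pi_{i,j}$ have classes $l_k+l_l$, any pseudo-effective class pairs non-negatively with them, and the region $\{a_i+a_j\geq 0 \text{ for all } i\neq j\}$ is exactly the cone spanned by the four $Y_i$ and the four $S_{i,j,k}=-Y_l+Y_i+Y_j+Y_k$; the paper instead argues primally that any prime divisor other than an $S_{i,j,k}$ meets every ruling $l_{i,j,k}$ non-negatively and hence has all coordinates $a_i\geq 0$.

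The genuine problem is the step you flag as hardest, namely proving $\overline{\mathrm{Eff}}(X)=\overline{\mathrm{Mov}}(X)$: this cannot be done, because in the standard sense of the movable cone of divisors it is false. Each $S_{i,j,k}$ is swept out by the rulings $l_{i,j,k}$, and $S_{i,j,k}\cdot l_{i,j,k}=-1$ (it is the exceptional divisor of the blow-up $\pi_{i,j,k}$), while any divisor class with no divisorial component in its stable base locus pairs non-negatively with a family of curves covering a divisor; hence $[S_{i,j,k}]\notin\overline{\mathrm{Mov}}(X)$, no sequence of movable classes approaches it, and in fact $\overline{\mathrm{Mov}}(X)=\mathrm{Nef}(X)$ here since all four extremal contractions of $X$ are divisorial (there are no flops). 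So the 2-ray game and pencil constructions you propose for this step would fail. Note that the paper's own proof of (f) never addresses the parenthetical claim either; it only establishes the pseudo-effective description. You should therefore treat that parenthetical as a misstatement (or as using ``movable'' in a nonstandard sense) rather than as something your proof must, or can, deliver.
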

\begin{proof}
\begin{enumerate}[label=(\alph*), wide, labelwidth=!]
\item
Let $[(x_1: y_1), (x_2: y_2), (x_3: y_3), (x_4: y_4)]$ be the coordinates on $(\mathbb{P}^1)^4$. Without loss of generality, assume that $i, j, k = 2, 3, 4$. Let $X$ be given by an equation $x_1L + y_1L' = 0$ in $(\mathbb{P}^1)^4$ where $L=L(x_2,y_2,x_3,y_3,x_4,y_4)$ and $L'=L'(x_2,y_2,x_3,y_3,x_4,y_4)$ are forms of type $(1,1,1)$. The map $\pi_{2,3,4}$ is an isomorphism outside the curve $C=C_{2,3,4}$ given by the equations $L=L'=0$ on $(\mathbb{P}^1)^3$. Note that $C$ is smooth  since $X$ is smooth. The fibers of $\pi_{2,3,4}$ over points in $C$ are projective lines. Hence, $X$ is a blow up of $C$ on $(\mathbb{P}^1)^3$.  Let us denote the exceptional divisor of $\pi_{2,3,4}$ by $S=S_{2,3,4}$. Note that $S\simeq C\times \mathbb{P}^1$, since $S$ is given by the equations $L=L'=0$ on $(\mathbb{P}^1)^4$.

Also note that $S$ has type $(-1, 1, 1, 1)$. Indeed, on $X$ the equation $L=0$, which has type $(0, 1, 1, 1)$, defines a union of two prime divisors: the divisor $S$ given by the equations $L=L'=0$, and the divisor $Y_1$ given by the equation $y_0=0$ which has degree $(1,0,0,0)$.

\item
Follows from the previous assertion and the fact that the projection $(\mathbb{P}^1)^3\to (\mathbb{P}^1)^2$ is a $\mathbb{P}^1$-bundle. We may assume that $i,j=1,2$. In the above notation, consider a curve $C=C_{1,2,3}$ which is the image of the exceptional divisor $S_{1,2,3}$ under the map $\pi_{1,2,3}$. Then $C=D_1\cap D_2$ on $(\mathbb{P}^1)^3$ where both $D_i$ have type $(1,1,1)$. Note that $D_i$ are smooth del Pezzo surfaces of degree $6$, and $C=D_2|_{D_1}$ is an anticanonical section on $D_1$.

Consider an induced contraction $\pi_{1,2}|_{D_i}\colon D_i\to (\mathbb{P}^1)^2$ which is a blow down of two disjoint $(-1)$-curves. Since $C$ is equivalent to $-K_{D_1}$, it intersects each $(-1)$-curve in one point. Hence, $\pi_{1,2}|_{C}$ is an isomorphism, and $\pi_{1,2}(C)=C'$ is a smooth elliptic curve on $(\mathbb{P}^1)^2$, so it has bidegree $(2, 2)$. Note that the fiber of $\pi_{1,2}\colon X\to (\mathbb{P}^1)^2$ over points of $C'$ is a union of two smooth rational curves intersecting at a point.

\item
By adjunction formula, the general fiber $Y=Y_i$ of $\pi_i\colon X\to \mathbb{P}^1$ is a del Pezzo surface. It is smooth, since $X$ is smooth. The fact that $K_Y^2=6$ is straightforward.

\item
The claim on the intersection numbers $Y_i\cdot l_{j, k, l}$ is straighforward. By Lefschetz theorem, an embedding $X\to (\mathbb{P}^1)^4$ induces an isomorphism of Picard groups, thus $\mathrm{N}_1(X)\simeq\mathbb{R}^4$. Note that the curves $l_{i, j, k}$, for $1\leq i<j<k\leq 4$, which are fibers of extremal contractions $\pi_{i.j,k}$, for $1\leq i<j<k\leq 4$, are extremal. Hence they define support functions for the nef cone. As a consequence, the nef cone is contained in the cone generated by the divisors $Y_i$ of types $(1,0,0,0)$, $(0,1,0,0)$, $(0,0,0,1)$, $(0,0,0,1)$, which are fibers of the contractions $\pi_i$ for $1\leq i\leq 4$. On the other hand, clearly such divisors are nef.

\item
Since the Mori cone is dual to the nef cone, the claim follows immediately. %Note that
%\[
%l_1\cdot (1, 0, 0, 0) = 1, \quad \quad l_1\cdot (0, 1, 0, 0) = l_1\cdot (0, 0, 1, 0) = l_1\cdot (0, 0, 0, 1) = 0.
%\]

\item
Obviously, the cone spanned by $S_{i,j,k}$ for $1\leq i<j<k\leq 4$, and $Y_i$ for $1\leq i\leq 4$, is contained in the pseudo-effective cone. On the other hand, if $D$ is an effective prime divisor different from $S_{i,j,k}$ for $1\leq i<j<k\leq 4$, we have $D\cdot l_{i,j,k}\geq 0$ for $1\leq i<j<k\leq 4$. Consequently, $D$ has type $(a_1, a_2, a_3, a_4)$ with $a_i\geq0$. The claim follows.
\end{enumerate}
\end{proof}

The following fact is well-known.

\begin{proposition}[{\cite[Lemma 8.15]{ChAll}}]
The group $\mathrm{Aut}(X)$ is finite.
\end{proposition}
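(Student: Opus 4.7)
The plan is to show that the identity component $\mathrm{Aut}^0(X)$ is trivial; since for a smooth Fano variety $\mathrm{Aut}(X)$ is an algebraic group of finite type, this immediately gives finiteness.

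Since $\mathrm{Aut}^0(X)$ is connected, its action on the discrete lattice $\mathrm{Pic}(X)$ is trivial, so by Proposition~\ref{prop-cones}(d) it fixes each of the four extremal nef classes $[Y_i]$ and hence preserves each projection $\pi_i \colon X \to \mathbb{P}^1$. This yields a homomorphism
\[
\rho \colon \mathrm{Aut}^0(X) \to (\mathrm{PGL}_2)^4,
\]
which is injective: the product $(\pi_1,\pi_2,\pi_3,\pi_4) \colon X \hookrightarrow (\mathbb{P}^1)^4$ is the embedding of $X$ as a divisor, so any element of the kernel acts trivially on each $\mathbb{P}^1$ factor and hence trivially on $X$. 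The image of $\rho$ is a connected algebraic subgroup of $(\mathrm{PGL}_2)^4$ stabilising the projective class $[f]$ of the defining section $f \in H^0(\mathcal{O}(1,1,1,1)) \cong (\mathbb{C}^2)^{\otimes 4}$ of $X$, so the task becomes showing this stabiliser is $0$-dimensional.

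If not, then by the Jordan decomposition we may take a nonzero element $v \in \mathfrak{sl}_2^{\oplus 4}$ satisfying $v \cdot f \in \mathbb{C} \cdot f$ that is either semisimple (integrating to a $\mathbb{G}_m$ under which $f$ is a weight vector) or nilpotent (integrating to a $\mathbb{G}_a$ under which $f$ is an invariant). The hard part will be ruling out both possibilities. Up to the action of the Weyl group $\{\pm 1\}^4 \rtimes S_4$ only finitely many cases arise, and in each one must exhibit a singularity of $X$. In the torus case, after diagonalising $\lambda(t) = \big(\mathrm{diag}(t^{a_i},t^{-a_i})\big)_i$, the section $f$ lies in a single weight subspace of $(\mathbb{C}^2)^{\otimes 4}$, and a case analysis on $(a_1,\ldots,a_4)$ and on the weight $w$ produces a singular point of $X$: either at a $\lambda$-fixed coordinate point whose multi-index has no neighbour (at Hamming distance $\le 1$) of weight $w$, or, for degenerate tuples such as $(1,1,0,0)$, on a non-coordinate stratum located by a direct Jacobian computation. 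In the unipotent case, the six-dimensional invariant subspace of $(\mathbb{C}^2)^{\otimes 4}$ is spanned by $e_+^{\otimes 4}$, three vectors of the form $\omega_{ij}\, e_+^{(k)} e_+^{(l)}$ with $\omega_{ij} = e_+^{(i)} e_-^{(j)} - e_-^{(i)} e_+^{(j)}$, and two products $\omega_{ij}\omega_{kl}$; one verifies that every nonzero combination defines either a reducible divisor or one with a positive-dimensional critical locus. Obtaining a singularity in all cases contradicts smoothness of $X$, so $\mathrm{Aut}^0(X) = \{e\}$ and the proposition follows.
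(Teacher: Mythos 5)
The paper offers no argument here at all: the statement is imported from \cite{ChAll}*{Lemma 8.15}, so any self-contained proof is by definition a different route, and the skeleton of yours is correct. Indeed, $\mathrm{Aut}(X)$ is a linear algebraic group (it is the stabiliser of the anticanonically embedded $X$), $\mathrm{Aut}^0(X)$ acts trivially on the lattice $\mathrm{Pic}(X)$, hence fixes each class $[Y_i]$ and permutes the fibres of each $\pi_i$, the induced map $\rho\colon\mathrm{Aut}^0(X)\to(\mathrm{PGL}_2)^4$ is injective because $(\pi_1,\pi_2,\pi_3,\pi_4)$ is the embedding $X\subset(\mathbb{P}^1)^4$, and a positive-dimensional stabiliser of $[f]$ would yield either a $\mathbb{G}_m$ for which $f$ is a weight vector or a $\mathbb{G}_a$ for which $f$ is invariant. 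All of these reductions are sound.

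The genuine gap is that the step you yourself label ``the hard part'' is asserted rather than proved, and it is precisely the content of the proposition. In the torus case you give a recipe (a coordinate fixed point none of whose Hamming-distance-$\leq 1$ neighbours has weight $w$, else a Jacobian computation on a non-coordinate stratum for degenerate tuples such as $(1,1,0,0)$), but you neither enumerate the finitely many combinatorial types of slices $\{\varepsilon : a\cdot\varepsilon = w\}$ nor verify that the recipe exhausts them; ``a case analysis produces a singular point'' is exactly what must be checked, since smooth Fano threefolds invariant under a torus certainly exist and only the specific multidegree $(1,1,1,1)$ rules them out here. In the unipotent case your description covers only a nilpotent element acting nontrivially on all four factors; elements supported on one, two or three factors are not addressed (they are easier: support on one factor forces $f=y_1\,g$, a reducible hence singular divisor, while for support on two or three factors the stratum where the corresponding $y_i$ vanish lies on $X$ and an explicit derivative check there produces a singular point or a reducibility). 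Note also that for the full-support case there is a one-line argument you could substitute: every invariant monomial is divisible by at least two of the $y_i$, so $f$ and all its first partials vanish at the point $y_1=y_2=y_3=y_4=0$, which is therefore a singular point of $X$; in particular the stronger goal you state (``reducible or positive-dimensional critical locus'') is unnecessary. Until the torus enumeration is actually carried out --- or replaced by a citation, e.g.\ to \cite{ChAll} or to the classification of smooth Fano threefolds with infinite automorphism group --- the proposal is a plausible strategy rather than a complete proof.
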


Throughout the paper, by a $(-1)$-curve we mean a smooth rational curve such that its anti-canonical degree is equal to $1$.
\begin{proposition}
\label{prop-du-val}
%Let $X$ be a smooth divisor of type $(1,1,1,1)$ in $\mathbb{P}^1\times\mathbb{P}^1\times\mathbb{P}^1\times\mathbb{P}^1$.
Let $Y=Y_i\subset X$ be a fiber of the projection $\pi_i\colon X\to \mathbb{P}^1$ for some $1\leq i\leq 4$. Then~$Y$ is either a smooth sextic del Pezzo surface, or a normal sextic del Pezzo surface with exactly one du Val singularity of type $\mathbb{A}_1$. Moreover, in the latter case we have $\rho(Y)=3$, and $Y$ contains exactly $3$ $(-1)$-curves.
\end{proposition}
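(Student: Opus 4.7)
By the symmetry of the four factors, one may assume $i=1$. As in the proof of Proposition~\ref{prop-cones}(a), write the defining equation of $X$ as $x_1 L+y_1 L'=0$ with $L, L'$ sections of $\mathcal{O}(1,1,1)$ in the last three factors. Then $Y=Y_1$ is isomorphic to a divisor $\{\alpha L+\beta L'=0\}$ of type $(1,1,1)$ on $(\mathbb{P}^1)^3$. By adjunction $-K_Y=\mathcal{O}(1,1,1)|_Y$, so $(-K_Y)^2=6$ and $Y$ is a (possibly singular) anticanonically embedded del Pezzo of degree $6$.

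To analyse singularities, I would project $Y$ via $\pi\colon Y\to(\mathbb{P}^1)^2_{(2,3)}$, writing the equation as $A\cdot x_4+B\cdot y_4$ with $A,B$ of type $(1,1)$. Then $\pi$ identifies $Y$ with the blowup of $(\mathbb{P}^1)^2$ along the length-$2$ subscheme $V(A,B)$, and the possibilities are: (a) two distinct reduced points, so $Y$ is smooth with $\rho(Y)=4$; (b) a length-$2$ scheme at a single point $p$ (i.e.\ $V(A)$ and $V(B)$ are tangent at $p$), producing a unique isolated singular point $q=(p,w_0)$; or (c) a $1$-dimensional subscheme, in which case $A$ and $B$ share a common linear factor $\ell$ and $Y$ is reducible.

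Case (c) is to be ruled out via the smoothness of $X$. Writing $A=\ell A_0$ and $B=\ell B_0$, the fiber is $Y=V(\ell)\cup V(A_0 x_4+B_0 y_4)$ with singular locus the $(1,1)$-curve $V(\ell)\cap V(A_0 x_4+B_0 y_4)$ inside $V(\ell)\cong(\mathbb{P}^1)^2_{(3,4)}$. In an affine chart $x_1=1,\ s=y_1$ near the fiber, the equation $\ell(A_0 x_4+B_0 y_4)+sL'$ has all its $(\mathbb{P}^1)^3$-partials vanishing along this singular curve, with $\partial/\partial s=L'$. Since $L'|_{V(\ell)}$ is a $(1,1)$-form on $V(\ell)$, B\'ezout forces it to meet $V(A_0 x_4+B_0 y_4)|_{V(\ell)}$ at two points; there $X$ is singular, a contradiction (the limiting case $L'|_{V(\ell)}\equiv 0$ being excluded because it would make $X$ itself reducible). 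For case (b), the Hessian of $G=\alpha L+\beta L'$ at $q$, being that of a $(1,1,1)$-form, has zero diagonal, hence $\det H=2h_{23}h_{24}h_{34}$ with $h_{ij}=\partial^2 G/\partial u_i\partial u_j(q)$. The fiber $C_{w_0}=V(G(\cdot,\cdot,w_0))\subset(\mathbb{P}^1)^2_{(2,3)}$ passes through $p$ and is singular there (by $\nabla G(q)=0$), so the $(1,1)$-curve $C_{w_0}$ is either the union of a $(1,0)$- and a $(0,1)$-line meeting at $p$, or identically zero; the latter forces $G$ to factor and $Y$ to be reducible, reducing to case (c). In the non-degenerate case the $u_2u_3$-coefficient of $C_{w_0}$ equals $h_{23}(q)$ and is non-zero. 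Applying the same argument to the projections onto $(\mathbb{P}^1)^2_{(2,4)}$ and $(\mathbb{P}^1)^2_{(3,4)}$ yields $h_{24}(q),\,h_{34}(q)\neq 0$, so the Hessian is non-degenerate and $q$ is an $\mathbb{A}_1$-singularity.

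For the final assertion, the minimal resolution $\widetilde Y\to Y$ is identified with the iterated blowup of $(\mathbb{P}^1)^2$ at $p$ and then at the infinitely near point $p'$ corresponding to the common tangent direction of $V(A)$ and $V(B)$; hence $\rho(\widetilde Y)=4$ and, contracting the unique $(-2)$-curve, $\rho(Y)=3$. Enumerating integral $(-1)$-classes $C=aH_1+bH_2+cE_1+dE_2$ on $\widetilde Y$ (with $H_1,H_2$ the pullbacks of the two rulings, $E_1$ the strict transform of the first exceptional, $E_2$ the second exceptional) subject to $-K\cdot C=1$, $C^2=-1$, and the effectivity inequalities produces exactly three such classes: $E_2$, $H_1-E_1-E_2$, and $H_2-E_1-E_2$. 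Each meets the $(-2)$-curve $E_1$ transversally in one point and therefore descends to one of the three $(-1)$-curves on $Y$ passing through the $\mathbb{A}_1$-point. The main obstacle I anticipate is step (b): reducing the Hessian non-degeneracy to the reducibility of a conic fiber is the key technical ingredient that prevents worse du Val singularities from occurring.
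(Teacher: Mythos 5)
Your proposal is essentially correct but follows a genuinely different route from the paper. The paper first proves that $Y$ is irreducible, reduced and normal (using that $(L'=0)\cap Y$ is the smooth elliptic curve $C$), then invokes the Hidaka--Watanabe classification \cite{HW81} of normal Gorenstein del Pezzo surfaces of degree $6$: the cone over an elliptic curve is excluded by citing \cite{Lo19}, the du Val cases with $\rho(Y)\leq 2$ are excluded because $Y\subset(\mathbb{P}^1)^3$ forces $\rho(Y)\geq 3$, and the remaining $\rho(Y)=3$ case with four $(-1)$-curves is excluded by analysing the contraction $Y\to\mathbb{P}^1\times\mathbb{P}^1$. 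You instead identify $Y$ explicitly with the blow-up of $(\mathbb{P}^1)^2$ along the length-two scheme $V(A,B)$, exclude reducibility directly from the smoothness of $X$, get the $\mathbb{A}_1$-ness from the multilinear Hessian trick, and count $(-1)$-curves by lattice enumeration on the explicit minimal resolution. This is more elementary and self-contained (no classification list, no external reference for the cone case), at the price of more case bookkeeping; note that realising $Y$ as a blow-up of $(\mathbb{P}^1)^2$ is also what silently kills the four-line surface, which is not a blow-up of $\mathbb{P}^1\times\mathbb{P}^1$ --- the same exclusion the paper performs with its contraction argument.

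Two points need patching. First, your case (c) only treats a common factor $\ell$ of ruling type; the sub-case where $A$ and $B$ are proportional (common $(1,1)$ factor), which is exactly what the ``identically zero'' branches of your Hessian step reduce to, must also be excluded --- the same computation works, since the one-dimensional singular locus of $Y$ has positive degree against $L'$, so either $L'$ vanishes somewhere on it (making $X$ singular) or $X$ is reducible. Second, and more importantly, your final enumeration tacitly assumes that $E_1$ is the \emph{unique} $(-2)$-curve on $\widetilde Y$, i.e.\ that the common tangent direction of $V(A)$ and $V(B)$ at $p$ is not along a ruling. If it were, the strict transform of that ruling fiber would be a second $(-2)$-curve, the class $H_1-E_1-E_2$ (or $H_2-E_1-E_2$) would no longer be represented by an irreducible curve, and the count would drop to two $(-1)$-curves with two singular points on the anticanonical model. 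This sub-case does not occur, but you must say why: a $(1,1)$-curve tangent to a ruling fiber meets it with multiplicity at least $2>1$, hence contains it, so both $A$ and $B$ would contain that fiber and you are back in the already excluded case (c); alternatively, a $(-2)$-curve on $\widetilde Y$ not contracted by $\widetilde Y\to Y$ would map to a curve on which the ample divisor $-K_Y=\mathcal{O}(1,1,1)|_Y$ has degree zero. With this one remark inserted, your argument is complete and yields exactly the paper's conclusion: $\rho(Y)=3$ and three $(-1)$-curves, all passing through the $\mathbb{A}_1$-point.
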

\begin{proof}
Without loss of generality, we assume that $i=1$. First we show that $Y$ is irreducible and reduced. In the notation of Proposition \ref{prop-cones}, the projection $\pi_{2,3,4}$ is a blow up of a smooth elliptic curve $C=C_{2,3,4}$ given by the equations $L=L'=0$ of type $(1,1,1)$ on $(\mathbb{P}^1)^3$, if $X$ is given by the equation $x_1L + y_1L' = 0$ in $(\mathbb{P}^1)^4$. Then, any fiber of $\pi_{1}$ is given by the equation $\lambda L + \mu L'=0$  in $(\mathbb{P}^1)^3$ where $\lambda, \mu\in\mathbb{C}$. After a coordinate change, we may assume that $\mu= 0$. Hence $Y$ is given by the equation $L=0$.

Assume that $Y=\sum k_j Y_j$ for $j\geq 1$ and some $k_j\geq 1$. Since the divisor $(L'=0)$ is ample on $(\mathbb{P}^1)^3$, the intersection $(L'=0)\cap Y_i$ is a non-empty curve for each~$j$. Since $(L'=0)\cap Y=C$, and $C$ is smooth, we conclude that $Y$ has only one component, so $j=1$ and $k_1=1$. In other words, $Y$ is irreducible and reduced. Moreover, the same argument shows that the singular locus of $Y$ cannot be one-dimensional, hence $Y$ is normal.
%Consider the projection $Y\subset \mathbb{P}^1\times\mathbb{P}^1\times\mathbb{P}^1\to\mathbb{P}^1\times\mathbb{P}^1$ to the last two copies of $\mathbb{P}^1$. Suppose that $Y$ is given by the equation
%\[x_0 l(x_1, y_1, x_2, y_2) + y_0 l'(x_1, y_1, x_2, y_2)=0,\]
%where $l$ and $l'$ have bidegree $(1, 1)$. Then, if $l\neq 0$ and $l'\neq 0$, then $(x_0:y_0)$ is determined uniquely. If $l=0$ and $l'=0$ share no common components, then $Y$ is birational to $\mathbb{P}^1\times \mathbb{P}^1$. Otherwise, we may assume that they share a component of type $(1,1)$, and hence after scaling $l=l'$, or that they share a component of type $(1,0)$, and we have $l=x_1l_1$, $l'=x_1l'_1$.
It follows that $Y$ is a normal Gorenstein surface with ample $-K_Y$ and $(-K_Y)^2=6$. By \cite{HW81} %and \cite{R94, F95},
we have the following possibilities:
\begin{itemize}
\item
$Y$ is rational, in which case $Y$ has du Val singularities,

\item
$Y$ is non-rational, in which case $Y$ is a cone over a curve of genus $1$. However, this case does not occur on a smooth threefold $X$, see e.g. \cite[Proposition 1.3]{Lo19}.%degree $6$ embedded in $\mathbb{P}^5$. Hence
%\item
%$Y$ is non-normal and rational, in which case its normalization $\overline{Y}$ is isomorphic either to a Hirzebruch surface $\mathbb{F}_2$, or to a Hirzebruch surface $\mathbb{F}_4$. This case is excluded above.
\end{itemize}

It remains to deal with the first case. That is, we assume that $Y$ is a sextic del Pezzo surface with du Val singularities. Denote by $Y'\to Y$ the minimal resolution of $Y$, and consider the blow down morphism $Y'\to\mathbb{P}^2$. According to \cite{HW81}, we have the following possibilities:
\begin{enumerate}
\item
$Y$ is smooth, in which case $Y'=Y$, $Y'$ is a blow up of $3$ non-collinear points on $\mathbb{P}^2$, $\rho(Y)=4$, and $Y'$ has $6$ $(-1)$-curves.

\item
$Y$ has a unique singular point of type $\mathbb{A}_1$, $Y'$ is a blow up of a point and a two infinitely near points on $\mathbb{P}^2$ such that these the $3$ points are not collinear. In this case $\rho(Y)=3$, and $Y'$ has $4$ $(-1)$-curves.

\item
$Y$ has a unique singular point of type $\mathbb{A}_1$, $Y'$ is a blow up of $3$ collinear points on $\mathbb{P}^2$. In this case $\rho(Y)=3$, and $Y'$ has $3$ $(-1)$-curves.

\item
$Y$ has one singular point of types $\mathbb{A}_2$, $Y'$ is a blow up of $3$ infinitely near points, $\rho(Y)=2$, and $Y'$ has $2$ $(-1)$-curves.

\item
$Y$ has two singular points of types $\mathbb{A}_1$, $Y'$ is a blow up of a point and two infinitely near points, $\rho(Y)=2$, and $Y'$ has $2$ $(-1)$-curves.

\item
$Y$ has two singular points of type $\mathbb{A}_1$ and $\mathbb{A}_2$, $Y'$ is a blow up of three infinitely near points, $\rho(Y)=1$, and $Y'$ has $1$ $(-1)$-curve.
\end{enumerate}

Note that since $Y\subset (\mathbb{P}^1)^3$, we have $\rho(Y)\geq 3$. This excludes the cases $(4)-(6)$. We show that the possibility $(2)$ also does not occur. Consider a contraction $Y\to (\mathbb{P}^1)^2$. Since $\rho(Y)=3$, only one curve can be contracted. Note that this curve should pass through a singular point on~$Y$. This contradicts to the fact that on $Y$ there exists a $(-1)$-curve which is disjoint from the singular point on $Y$, while on $\mathbb{P}^1\times \mathbb{P}^1$ there are no $(-1)$-curves. The proposition is proven.
\end{proof}

\begin{proposition}
\label{St2}
Let $P$ be a point on $X$. Then at most one of the fibers of $\pi_i\colon X\to \mathbb{P}^1$ that contain $P$ is singular at $P$.
\end{proposition}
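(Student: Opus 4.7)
The plan is to argue by contradiction. Suppose that two of the four fibers through $P$, say $Y_{i_1}$ and $Y_{i_2}$ with $i_1 \neq i_2$, are both singular at $P$; I would then derive a contradiction with the smoothness of $X$ at $P$.

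The first step is to choose convenient local coordinates. I would fix homogeneous coordinates on each $\mathbb{P}^1$-factor so that $P = ((1:0),(1:0),(1:0),(1:0))$, and pass to the affine chart $\{x_1 x_2 x_3 x_4 \neq 0\}$ with local coordinates $v_i = y_i/x_i$, in which $P$ is the origin. Since $X$ is of type $(1,1,1,1)$, its defining equation in this chart becomes a polynomial of degree at most one in each variable, which I would expand as
$$F(v_1, v_2, v_3, v_4) = \sum_{I \subseteq \{1,2,3,4\}} c_I \prod_{i \in I} v_i,$$
with $c_\emptyset = 0$ because $P \in X$. In these coordinates, smoothness of $X$ at $P$ is equivalent to the existence of some $j \in \{1,2,3,4\}$ with $c_{\{j\}} \neq 0$.

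The key step is to read off the singularity conditions for the fibers in terms of these coefficients. The fiber $Y_i$ of $\pi_i$ through $P$ is the slice $\{v_i = 0\} \cap X$ in this chart, with local equation
$$F|_{v_i = 0} = \sum_{I \not\ni i} c_I \prod_{j \in I} v_j.$$
This polynomial vanishes at the origin and has vanishing differential there precisely when $c_{\{j\}} = 0$ for every $j \neq i$. Consequently, if both $Y_{i_1}$ and $Y_{i_2}$ are singular at $P$, the two conditions together force $c_{\{j\}} = 0$ for all four $j \in \{1,2,3,4\}$, which contradicts the smoothness of $X$ at $P$.

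I do not expect any serious obstacle: the entire argument is a short multilinear expansion at $P$, combined with the observation that the singularity of $Y_i$ at $P$ controls exactly three of the four linear coefficients of $F$ at $P$, so the vanishing of two such triples already exhausts all four linear terms of $F$. The only point requiring a little care is the coordinate setup and the verification that $Y_i$ is singular at $P$ iff those three specific linear coefficients of $F$ vanish.
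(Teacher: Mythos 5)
Your argument is correct and is essentially the paper's own proof: both pass to an affine chart centered at $P$, observe that the fiber $Y_i$ is locally cut out by setting the $i$-th coordinate to zero, and note that singularity of $Y_i$ at $P$ forces the vanishing of the three linear coefficients $c_{\{j\}}$, $j\neq i$, so two singular fibers would kill all four linear terms and contradict the smoothness of $X$ at $P$. The multilinear expansion you write out is just a more explicit form of the paper's expansion $c_1z_1+c_2z_2+c_3z_3+c_4z_4+g_{\geq 2}$.
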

\begin{proof}
Local computation. Let $[(x_1: y_1), (x_2: y_2), (x_3: y_3), (x_4: y_4)]$ be the coordinates on $(\mathbb{P}^1)^4$. We may assume that $P=[(0: 1), (0: 1), (0: 1), (0: 1)]$. Let $Y_i$ be the fiber of $\pi_i$ that contains the point $P$. Consider an affine chart $\mathbb{A}^4\subset(\mathbb{P}^1)^4$ given by the equations
$y_i\neq 0%,\quad y_2\neq 0,\quad y_3\neq 0,\quad y_4\neq 0.
$ for $1\leq i\leq 4$. Let $
z_i=x_i/y_i%,\quad y=\frac{x_2}{y_2},\quad z=\frac{x_3}{y_3},\quad t=\frac{x_4}{y_4}
$
be the coordinates on $\mathbb{A}^4$. Then $\left.X\right|_{\mathbb{A}^4}$ is given by the equation
\[
g(x, y, z, t) = c_1z_1 + c_2z_2 + c_3z_3 + c_4z_4 + g_{\geq 2}(z_1, z_2, z_3, z_4) = 0, \quad \quad c_i\in\mathbb{C},
\]
where $g_{\geq 2}$ has degree $\geq 2$. Note that around $P$ we have
$
Y_i|_{\mathbb{A}^4}=\{ z_i=0\}%\quad Y_2|_{\mathbb{A}^4}=\{ y=0\}, \quad Y_3|_{\mathbb{A}^4}=\{ z=0\}, \quad Y_4|_{\mathbb{A}^4}=\{ t=0\}.
$ for $1\leq i\leq 4$.
Since $X$ is smooth, we see that at most one of $Y_i$ is singular at $P$.
\end{proof}

\begin{proposition}
\label{Sing1}
%Let $X\subset\mathbb{P}^1\times\mathbb{P}^1\times\mathbb{P}^1\times\mathbb{P}^1$ be a smooth Fano threefold such that $X$ is a divisor of type $(1,1,1,1)$.
Let $P$ be a point on $X$ and let $Y_i$ be the fibers of $\pi_i\colon X\rightarrow\mathbb{P}^1$ that contain $P$, for $1\leq i\leq 4$. Assume that for some $i$ there exists a $(-1)$-curve $E_1$ on $Y_i$  such that $P\in E_1$. Then for some $j$ with $1\leq j\leq 4$, the fiber $Y_j$ is smooth, and moreover, the inclusion $E_1\subset Y_j$ holds.
\end{proposition}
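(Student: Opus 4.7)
I would begin by pinning down the numerical class of $E_1$. Since $E_1 \subset Y_i$, we have $Y_i \cdot E_1 = 0$, and since $E_1$ is a $(-1)$-curve, $-K_X \cdot E_1 = 1$. Writing $-K_X = Y_1 + Y_2 + Y_3 + Y_4$ and using that each $Y_j \cdot E_1 \geq 0$ for $j\neq i$ (it equals the degree of $\pi_j|_{E_1}$), exactly one index $j^{*}\neq i$ satisfies $Y_{j^{*}}\cdot E_1 = 1$, while the other two indices yield intersection zero, which means $E_1$ is contained in the corresponding fibers through $P$. Setting $\{a,b,c\} = \{1,2,3,4\}\setminus\{j^{*}\}$, a comparison with Proposition~\ref{prop-cones}(d)--(e) identifies $[E_1] = [l_{a,b,c}]$; since $\pi_{a,b,c}$ contracts $E_1$, the latter sits inside $S_{a,b,c}\cong C_{a,b,c}\times\mathbb{P}^1$ as a fiber of the ruling. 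Hence $E_1 = l_{a,b,c}$ is the fiber over $\pi_{a,b,c}(P)\in C_{a,b,c}$, and $E_1\subset Y_a\cap Y_b\cap Y_c$.

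It remains to show that at least one of $Y_a, Y_b, Y_c$ is globally smooth. For this I would work locally near $P$: choose affine coordinates $z_k = x_k/y_k$ so that $P$ is the origin and write the multilinear defining equation of $X$ as $g = \sum_k c_k z_k + (\text{higher order})$. The $1$-dimensionality of the scheme-theoretic intersection $\{z_a=z_b=z_c=0\}\cap X$ near $P$, combined with the fact that $g$ has degree $\leq 1$ in each variable, forces $c_d = 0$ and identifies $E_1$ locally with the $z_d$-axis. A degree computation using Proposition~\ref{prop-cones}(d) then shows that each $f_e:=\partial g/\partial z_e|_{E_1}$ is a section of $\mathcal{O}_{\mathbb{P}^1}(1)$ on $E_1\cong\mathbb{P}^1$ for $e\in\{a,b,c\}$, while $f_d\equiv 0$.

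The Jacobian criterion tells me that a point $q\in Y_e$ is singular on $Y_e$ exactly when $\partial g/\partial z_i(q)=0$ for every $i\neq e$. Proposition~\ref{prop-du-val} (together with case~(3) of its proof) shows that if $Y_e$ is singular, then its unique $A_1$ point lies on every $(-1)$-curve of $Y_e$, and in particular on $E_1$. Using $f_d\equiv 0$, I conclude that $Y_e$ is singular if and only if the two sections $f_i$, for $i\in\{a,b,c\}\setminus\{e\}$, share a common zero on $E_1$.

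Denote by $p_e$ the unique zero of $f_e$ whenever $f_e\not\equiv 0$. If all three of $f_a,f_b,f_c$ are nonzero, then $Y_e$ is singular precisely when $p_i=p_j$ with $\{i,j\}=\{a,b,c\}\setminus\{e\}$; were all three of the $Y_\ast$ simultaneously singular, we would get $p_a=p_b=p_c$, the three $A_1$ points would coincide, and Proposition~\ref{St2} applied at that common point would be violated. The degenerate case where some $f_e$ vanishes identically is handled similarly: no two of $f_a,f_b,f_c$ can vanish identically at once, otherwise the singular locus of some $Y_\ast$ would contain all of $E_1$, contradicting Proposition~\ref{prop-du-val}; and if say $f_a\equiv 0$, then $Y_a$ is smooth unless $p_b=p_c$, which again collapses the three $A_1$ points to a single point and contradicts Proposition~\ref{St2}. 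In every case at least one of $Y_a,Y_b,Y_c$ is smooth and, by construction, contains $E_1$, furnishing the required $j$. The main obstacle is computing the degree of $f_e$ as a global section on $E_1$ (a bookkeeping exercise with the multi-degrees of the $\mathcal{O}(Y_i)$'s) and exhausting the degenerate subcases where some $f_e$ vanishes identically.
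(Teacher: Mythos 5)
Your proposal is correct, but it runs along a genuinely different track than the paper's argument. The paper's proof is synthetic and very short: assuming the fiber $Y_i$ containing $E_1$ is singular at $P'$, it uses the classification in Proposition~\ref{prop-du-val} (case (3)) to see that all three $(-1)$-curves of $Y_i$ pass through $P'$, observes that each of them lies in two of the other three fibers through $P'$, so by pigeonhole some fiber $Y_j$ contains both $E_1$ and a second $(-1)$-curve $E_2$; since $(-1)$-curves on a singular sextic del Pezzo meet only at its $\mathbb{A}_1$ point, a singularity of $Y_j$ would have to sit at $E_1\cap E_2=\{P'\}$, which Proposition~\ref{St2} forbids. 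You instead first pin down $E_1$ numerically (correctly: $Y_j\cdot E_1=0$ for exactly three indices, so $E_1$ is a coordinate line $\{q_a\}\times\{q_b\}\times\{q_c\}\times\mathbb{P}^1$ lying in the three fibers $Y_a,Y_b,Y_c$ through $P$), then linearize: the partials of the multilinear equation restricted to $E_1$ are sections $f_a,f_b,f_c$ of $\mathcal{O}_{\mathbb{P}^1}(1)$ with $f_d\equiv 0$, and — using Proposition~\ref{prop-du-val} exactly as the paper does, to place a hypothetical singular point of $Y_e$ on $E_1$ — singularity of $Y_e$ becomes the coincidence of the zeros of the other two sections. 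Having all three of $Y_a,Y_b,Y_c$ singular then forces a common zero of all partials (equivalently three fibers singular at one point), contradicting smoothness of $X$ or Proposition~\ref{St2}; your treatment of the degenerate subcases ($f_e\equiv 0$) is also sound. The trade-off: your route costs some coordinate and $\mathcal{O}(1)$-degree bookkeeping (including the chart at infinity of $E_1$), but yields an explicit criterion for which of the three fibers containing $E_1$ are singular, whereas the paper's route avoids coordinates entirely by exploiting more of the geometry of the singular sextic del Pezzo (pairwise intersections of its $(-1)$-curves) together with the same two ingredients, Propositions~\ref{prop-du-val} and~\ref{St2}.
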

\begin{proof}
We may assume that there exists a $(-1)$-curve $E_1$ on $Y_1$ that passes through $P$. Assume that $Y_1$ has a singular point $P'$ (maybe $P=P'$). Then $P'$ is the intersection point of three $(-1)$-curves $E_1,E_2,E_3$ on $Y_1$. Consider three projections $\pi_{2,3}, \pi_{2,4}, \pi_{3,4}$ that map $Y_1$ onto $(\mathbb{P}^1)^2$. We see that two of these projections contract $E_1$. Hence, we may assume that $Y_2$ contains $E_1$ and $E_2$. Since~$Y_2$ is smooth at $P'$ (see Proposition \ref{St2}), we conclude that $Y_2$ is smooth.
\end{proof}
\iffalse
\begin{corollary}
\label{cor-smooth-fiber}
Let $Y=Y_i$ be a singular fiber of the projection $\pi_i\colon X\to \mathbb{P}^1$, and let $E_1$ be a $(-1)$-curve on $Y$. Then there exists a smooth fiber $Y'=Y_j$ of some other projection $\pi_j\colon X\to \mathbb{P}^1$ such that $E_1\subset Y'$.
\end{corollary}
\fi

\begin{proposition}
\label{Sing2}
%Let $X\subset\mathbb{P}^1\times\mathbb{P}^1\times\mathbb{P}^1\times\mathbb{P}^1$ be a smooth Fano threefold such that $X$ is a divisor of type $(1,1,1,1)$.
Let $P$ be a point on $X$ and let $Y_i$ be the fibers of $\pi_i\colon X\rightarrow\mathbb{P}^1$ that contain $P$, for $1\leq i\leq 4$. Put $Y=Y_1$. Assume that the curve $C'=Y\cap S$ passes through $P$, where $S=S_{2,3,4}$ is the exceptional divisor of the contraction $\pi_{2,3,4}$. Then at least one of $Y_i$ is smooth.
\end{proposition}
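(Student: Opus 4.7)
The plan is to exhibit a $(-1)$-curve through $P$ contained in each of the three fibers $Y_2, Y_3, Y_4$, and then invoke Proposition~\ref{Sing1} to force one of these fibers to be smooth. The natural candidate is $l = l_{2,3,4}$, the fiber of the birational contraction $\pi_{2,3,4}$ passing through $P$. This curve is well-defined and isomorphic to $\mathbb{P}^1$ precisely because the hypothesis $P \in C' = Y \cap S$ places $P$ on the exceptional divisor $S = S_{2,3,4}$, which is ruled over $C$ by such $\mathbb{P}^1$-fibers (the second factor of $S \simeq C \times \mathbb{P}^1$).

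First I would check that $l \subset Y_j$ for each $j \in \{2,3,4\}$. This is immediate from the factorization $\pi_j = \mathrm{pr}_j \circ \pi_{2,3,4}$, where $\mathrm{pr}_j \colon (\mathbb{P}^1)^3 \to \mathbb{P}^1$ is the projection to the appropriate factor: the map $\pi_j$ is constant on $l$, and since $P \in l \cap Y_j$ that constant value is the one defining $Y_j$. Next I would verify that $l$ is a $(-1)$-curve on $Y_j$ for $j=2,3,4$: by adjunction one has $-K_{Y_j} \cdot l = (-K_X - Y_j) \cdot l$, and combining $-K_X \sim Y_1 + Y_2 + Y_3 + Y_4$ (coming from the type $(1,1,1,1)$ of $-K_X$) with the intersection table $Y_i \cdot l_{2,3,4} = \delta_{i,1}$ of Proposition~\ref{prop-cones}(d) gives $-K_{Y_j} \cdot l = 1$. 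Since $l \simeq \mathbb{P}^1$ is smooth, this makes $l$ a $(-1)$-curve on each of $Y_2, Y_3, Y_4$.

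With this in hand, Proposition~\ref{Sing1} applied (say with $i=2$ and $E_1 = l$) produces an index $j$ such that $Y_j$ is smooth and contains $l$. Because $Y_1 \cdot l = 1 \neq 0$ prevents $l \subset Y_1$, this smooth fiber must be one of $Y_2, Y_3, Y_4$, proving the claim. I do not expect a genuine obstacle: the argument reduces to identifying $l_{2,3,4}$ as a $(-1)$-curve in three neighboring fibers, which is intersection-theoretic bookkeeping already laid out in Proposition~\ref{prop-cones}. The only conceivable subtlety is scheme-theoretic behavior of $l$, but since $\pi_{2,3,4}$ is the blow-up of a smooth curve its fibers over $C$ are honest smooth $\mathbb{P}^1$'s, so this concern does not arise.
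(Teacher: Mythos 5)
Your proposal is correct and follows essentially the same route as the paper: both take the fiber $l=l_{2,3,4}$ of $\pi_{2,3,4}$ through $P$ (which exists since $P\in S$), observe it is a $(-1)$-curve contained in $Y_2$ (the paper) or in $Y_2,Y_3,Y_4$ (you, with the adjunction and intersection-number bookkeeping spelled out), and conclude via Proposition~\ref{Sing1}. The extra remark about $l\not\subset Y_1$ is unnecessary for the stated conclusion, which only asks that some $Y_i$ be smooth.
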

\begin{proof}
Note that $S$ is an exceptional divisor of the projection $\pi_{2,3,4}\colon X\rightarrow(\mathbb{P}^1)^3$. Since $C'$ passes through $P$, we see that there exists a fiber $f$ of ruled surface $S$ such that $f$ passes through $P$. Note that $Y_2$ contains $f$. We see that $f$ is a $(-1)$-curve on $Y_2$. By Proposition \ref{Sing1} we see that at least one of $Y_i$ is smooth.
\end{proof}

\iffalse
\begin{example}
\[
X = \{ x_0 x_1 x_2 x_3 + y_0 y_1 y_2 y_3 + \ldots = 0 \}\subset \mathbb{P}^1\times\mathbb{P}^1\times\mathbb{P}^1\times\mathbb{P}^1.
\]

It was proven in \cite{ChAll} that this variety is K-stable.
\end{example}
\fi
%Since $C_1,C_2,C_3,C_4\subset X$, we see that $$g(x,0,0,0)=g(0,y,0,0)=g(0,0,z,0)=g(0,0,0,t)=0.$$ So, $X$ has a singularity in $P$, a contradiction.
%Assume that $X_1$ is singular in $P$. Put $$C_1=\mathbb{P}^1\times(0,1)\times(0,1)\times(0,1),\quad C_2=(0,1)\times\mathbb{P}^1\times(0,1)\times(0,1),$$ $$C_3=(0,1)\times(0,1)\times\mathbb{P}^1\times(0,1),\quad C_4=(0,1)\times(0,1)\times(0,1)\times\mathbb{P}^1.$$ Since $X\cdot C_1=X\cdot C_2=X\cdot C_3=X\cdot C_4=1$, we see that $C_2,C_3,C_4\subset X_1$. Let $f_2\colon X\rightarrow\mathbb{P}^1$ be the projection on the second factor and $X_2$ be the fiber that contains $P$. Assume that $X_2$ is singular in $P$. As above, $C_1,C_3,C_4\subset X_2$. So, $C_1,C_2,C_3,C_4\subset X$.

\section{K-stability and Abban-Zhuang theory}
We briefly recall some of the definitions in the theory of K-stability. For more details, see a survey \cite{Xu21} and references therein.

\subsection{Discrepancies and thresholds}
Let $(X, D)$ be a pair, and let $f\colon Y\to X$ be a proper birational morphism from a normal variety $Y$. For a prime divisor $E$ on $Y$, a \emph{log discrepancy} of $E$ with respect to the pair $(X, D)$ is defined as
\[
A_{(X, D)}(E) = 1 + \mathrm{coeff}_E ( K_Y - f^*(K_X + D)).
\]

\iffalse
\emph{Log canonical threshold} of an effective $\mathbb{Q}$-divisor $B$ with respect to a klt pair $(X, D)$ is defined as
\[
\mathrm{lct}(X, D; B) = \sup \{ \lambda\ |\ K_X + D + \lambda B\ \text{is log canonical} \}.
\]

Then, \emph{the global log canonical threshold} for a klt log Fano pair $(X, D)$ is defined as
\[
\mathrm{lct}(X, D) = \inf \{ \mathrm{lct}(X, D; B)\ |\ B\sim_{\mathbb{Q}}-K_X-D\ \text{and}\ B\ \text{is an effective $\mathbb{Q}$-divisor} \}.
\]
\fi
Put $L=-K_X-D$. By a \emph{pseudo-effective threshold} of $E$ with respect to a log Fano pair $(X, D)$ we mean the number
\[
\tau_{(X, D)}(E) = \sup\{ x\in\mathbb{R}_{\geq0}\colon f^*L - xE\ \text{is pseudo-effective}\}.
\]

Similarly, we define the \emph{nef threshold} of $E$ with respect to a log Fano pair $(X, D)$:
\[
\epsilon_{(X, D)}(E) = \sup\{ x\in\mathbb{R}_{\geq0}\colon f^*L - xE\ \text{is nef}\}.
\]

The \emph{expected vanishing order} of $E$ with respect to a log Fano pair $(X, D)$ is
\[
S_{(X, D)}(E) = \frac{1}{\mathrm{vol}(L)} \int_{0}^{\infty}{\mathrm{vol}( f^*L - xE )dx},
\]
where $\mathrm{vol}$ is the volume function, see \cite{Laz04}. The \emph{beta-invariant} $\beta_{(X, D)}(E)$ of $E$ with respect to a log Fano pair $(X, D)$ is defined as follows:
\[
\beta_{(X, D)}(E) = A_{(X, D)}(E) - S_{(X, D)}(E).
\]

Recall that the $\delta$-invariant of $E$ with respect to a log Fano pair $(X, D)$ (resp., $\delta$-invariant of $E$ along $Z$ with respect to a log Fano pair $(X, D)$) are defined as
\[
\delta(X, D) = \inf_{E/X} \frac{A_{(X, D)}(E)}{S_{(X, D)}(E)}, \ \ \ \ \delta_Z(X, D) = \inf_{E/X,\ Z\subset C(E)} \frac{A_{(X, D)}(E)}{S_{(X, D)}(E)}
\]
where $E$ runs through all prime divisors over $X$ (resp., $E$ runs through all prime divisors over $X$ whose center contains $Z$).
\iffalse
\subsection{Delta-invariant via log canonical thresholds.}
By \cite[Theorem C]{BJ20}, the following definition of an delta-invariant, given by Fujita-Odaka in ..., is equivalent to the one given above. Let $(X, D)$ be a klt log Fano pair. For every $m\in \mathbb{Z}_{\geq0}$ such that $m(K_X+D)$ is Cartier divisor, consider the space $\mathrm{H}^0(X, -m(K_X+D))$. Let $N_m = h^0(X, -m(K_X+D))$. Then, for every basis $s_1, \ldots, s_{N_m}$ in $\mathrm{H}^0(X, -m(K_X+D))$, we denote by $D(s_1), \ldots, D(s_{N_m})$ the corresponding divisors.
We call an effective $\mathbb{Q}$-divisor
\[
B = \frac{1}{mN_m}\left(D(s_1) + \ldots + D(s_{N_m})\right)
\]
an \emph{$m$-basis type divisor}. Note that $D\sim_{\mathbb{Q}} -K_X$. For each $m$, we define
\[
\delta_m(X, D) = \inf \{\mathrm{lct}(X, D; B)\ | \ B\ \text{is of $m$-basis type} \}.
\]

For each $m$, this infimum is attained, see e.g. ... Then, the delta-invariant for the pair $(X, D)$ is defined as follows:
\[
\delta(X, D) = \limsup_{m} \delta_m(X, D).
\]

This limit exists by \cite[Theorem E]{BJ20}.

\fi

%We formulate the main definitions.

\begin{definition}[\cite{Li17}, \cite{Fu19}, \cite{Fu16}]
\label{def-k-stability}
A klt log Fano pair $(X, D)$ is called
\begin{enumerate}
%\item
%\emph{K-semistable} if $\beta_{(X, D)}(E)\geq0$ for any prime divisor $E$ over $X$,
\item
\emph{divisorially semistable} (resp., \emph{divisorially stable}), if $\beta_{(X, D)}(E)\geq0$ (resp., $\beta_{(X, D)}(E)>0$) for any prime divisor $E$ on $X$. We say that $X$ is \emph{divisorially unstable} if it is not divisorially semistable,
\item
\emph{K-semistable} (resp., \emph{K-stable}) if $\beta_{(X, D)}(E)\geq0$ (resp., $\beta_{(X, D)}(E)>0$) for any prime divisor $E$ over $X$. We say that $X$ is \emph{K-unstable} if it is not K-semistable,
\item
\emph{uniformly K-stable} if $\delta(X, D)>1$.
%\item
%\emph{divisorially semistable}, if $\beta_{(X, D)}(E)\geq0$ for any prime divisor $E$ on $X$,
\end{enumerate}
\end{definition}

This definition is equivalent the original definition of K-stability given in \cite{Tia97}, \cite{Don02} in terms of test configurations. If $D=0$, we avoid writing it for the clarity of notation. Now, we recall two propositions from Abban-Zhuang theory developed in \cite{AZ20} and \cite[1.7]{ChAll}.

\begin{proposition}[{\cite[Corollary 1.7.26]{ChAll}}]
\label{Van1}
Let $X$ be a smooth Fano threefold, $Y\subset X$ be an irreducible normal surface that has at most du Val singularities, $Z\subset Y$ be an irreducible smooth curve. Then for any prime divisor $E$ over $X$ such that $C(E)=Z$ we have
\begin{equation}
\label{ineq-curve}
\frac{A(E)}{S(E)}\geq \min\left\{ \frac{1}{S_X(Y)}, \frac{1}{S(W^{Y}_{\bullet,\bullet};Z)} \right\},
\end{equation}
where
\begin{multline*}
S(W^{Y}_{\bullet,\bullet};Z)=\frac{3}{(-K_X)^3}\int\limits_0^{\infty}(P(u)^2\cdot Y)\cdot\ord_Z(N(u)|_Y)du
+\frac{3}{(-K_X)^3}\int\limits_0^{\infty}\int\limits_0^{\infty}\vol(P(u)|_Y-vZ)dv du.
\end{multline*}
Moreover, if the equality holds in \eqref{ineq-curve}, then $\frac{A(E)}{S(E)} = \frac{1}{S_X(Y)}$.
\end{proposition}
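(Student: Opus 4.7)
The statement is a restatement of Corollary~1.7.26 in \cite{ChAll}, which is itself an instance of the Abban--Zhuang adjunction technique of \cite{AZ20}. The plan is to decompose both $A_X(E)$ and $S_X(E)$ along the two-step flag $Z \subset Y \subset X$ via iterated Zariski decomposition, and then compare the resulting fractions by an elementary mediant inequality.

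First, I would pass to a common log resolution $f\colon \tilde X \to X$ extracting $E$ and containing the strict transform of $Y$. For $u \in [0, \tau_X(Y)]$ let $f^*(-K_X) - uY = P(u) + N(u)$ be the Zariski decomposition. Let $\nu$ denote the valuation induced on $Y$ by $\ord_E$ (centered at $Z$ by assumption), and set $t = \ord_E(Y)$. The chain rule for log discrepancies, using that $Y$ is Cartier in the smooth threefold $X$ and has du Val (hence klt) singularities, gives $A_X(E) = t + A_{(Y,0)}(\nu)$, and klt-ness of $Y$ at the generic point of the smooth curve $Z$ implies $A_{(Y,0)}(\nu) \geq 1$.

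Second, I would invoke the Abban--Zhuang upper bound
\[
S_X(E) \leq t \cdot S_X(Y) + A_{(Y,0)}(\nu) \cdot S(W^{Y}_{\bullet,\bullet}; Z),
\]
where the first integral in the displayed formula for $S(W^{Y}_{\bullet,\bullet}; Z)$ captures the contribution of the negative part $N(u)|_Y$ through $\ord_Z(N(u)|_Y)$, while the second integral is the Okounkov-body expansion of the restricted nef part $P(u)|_Y$ along $Z$. The elementary mediant inequality $(a+b)/(c+d) \geq \min\{a/c,\, b/d\}$ for positive reals then yields
\[
\frac{A_X(E)}{S_X(E)} \geq \frac{t + A_{(Y,0)}(\nu)}{t \cdot S_X(Y) + A_{(Y,0)}(\nu) \cdot S(W^{Y}_{\bullet,\bullet}; Z)} \geq \min\!\left\{\frac{1}{S_X(Y)},\, \frac{1}{S(W^{Y}_{\bullet,\bullet}; Z)}\right\},
\]
which is \eqref{ineq-curve}. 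For the equality assertion, one tracks when the mediant is sharp: when $t = 0$ the Abban--Zhuang bound is actually strict for any $E$ properly over $Z$, so equality forces $t > 0$; then mediant equality forces $1/S_X(Y) = 1/S(W^{Y}_{\bullet,\bullet}; Z)$, and in particular the common minimum equals $1/S_X(Y)$.

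The main obstacle is justifying the Abban--Zhuang upper bound on $S_X(E)$ when $Y$ carries du Val singularities; this requires the extension of Fujita's volume formula to multi-graded linear series on mildly singular surfaces developed in \cite{AZ20} and \cite[\S1.7]{ChAll}. Once that technical input is in place, the chain-rule computation and the mediant comparison are routine, and the equality analysis is a straightforward case check.
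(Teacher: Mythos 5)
The paper does not prove this statement at all: it is quoted verbatim from \cite[Corollary~1.7.26]{ChAll} and used as a black box, so the only fair comparison is between your sketch and the actual argument in \cite{AZ20} and \cite[\S 1.7]{ChAll}. Your sketch correctly identifies the provenance, but the two displayed claims it rests on are not legitimate steps. There is no ``valuation $\nu$ induced on $Y$ by $\ord_E$'' for an arbitrary prime divisor $E$ over $X$ with $C_X(E)=Z$: a valuation of the function field of $X$ does not restrict to one of the function field of $Y$, and no canonical $\nu$ with $A_X(E)=\ord_E(Y)+A_{(Y,0)}(\nu)$ exists in general (such a chain rule holds only in special situations, e.g.\ for quasi-monomial valuations adapted to an snc pair containing $Y$ and $Z$). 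Consequently the asserted bound $S_X(E)\le t\,S_X(Y)+A_{(Y,0)}(\nu)\,S(W^Y_{\bullet,\bullet};Z)$ is not a statement one can quote; written this way, the pair of displayed inequalities is essentially a stronger (and, for the $A$-part, false as stated) form of the very theorem to be proved, after which the mediant inequality is the trivial part. The genuine proof does not decompose the single valuation $\ord_E$: it refines the graded linear series of $-K_X$ by $Y$, chooses $m$-basis type divisors compatible with this refinement (and with the filtration induced by $E$), writes such a divisor as $S_m(Y)\,Y+\Gamma$ with $\Gamma|_Y$ of basis type for $W^Y_{\bullet,\bullet}$, and concludes by an inversion-of-adjunction argument along $Y$; the negative parts $N(u)|_Y$ enter exactly through the first integral in the formula for $S(W^Y_{\bullet,\bullet};Z)$.

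The ``moreover'' clause is also not obtained the way you suggest. Your claim that the bound is strict whenever $t=0$ is unsubstantiated, and it is precisely what would be needed to rule out equality with $\frac{A(E)}{S(E)}=\frac{1}{S(W^Y_{\bullet,\bullet};Z)}<\frac{1}{S_X(Y)}$; in \cite{ChAll} this refinement comes from tracking when each inequality in the adjunction argument can be an equality, not from a case analysis of a mediant. If you intend to use this proposition (as the paper does), cite it; if you intend to prove it, you must reproduce the filtration/basis-type-divisor argument rather than a valuation-theoretic chain rule through $Y$.
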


%By \cite[Remark 1.7.28]{ChAll}, in the above proposition the surface $Y$ may be non-normal, if $Z$ is an irreducible curve in $Y$ such that $Z\not\subset \mathrm{Sing}(Y)$. In this case, we should replace both $P(u)|_Y$ and $N(u)|_Y$ by their pullbacks on the normalization of $Y$.

Let $P(u,v)$ be the positive part of the Zariski decomposition of $P(u)|_Y-vZ$, and $N(u,v)$ be the negative
part of the Zariski decomposition of this divisor. We can write $N(u)|_Y=dZ+N'_Y(u)$, where $Z\not\subset
\mathrm{Supp}(N'_Y(u))$ and $d=d(u)=\ord_Z(N(u)|_Y)$.

\begin{proposition}[{\cite[Theorem 1.7.30]{ChAll}}]
\label{Van2}
Let $X$ be a smooth Fano threefold, $Y\subset X$ be an irreducible normal surface that has at most du Val singularities, $Z\subset Y$ be an irreducible smooth curve such that the log pair $(Y,Z)$ has purely log
terminal singularities. Let $P$ be a point in the curve $Z$. Then
\begin{equation}
\delta_P(X)\geq\min\left\{\frac{1-\Delta_P}{S(W^{Y,Z}_{\bullet,\bullet,\bullet};P)},\frac{1}{S(V^Y_{\bullet,\bullet};Z)},\frac{1}{S_X(Y)}\right\},
\end{equation}
where $\Delta_P$ is the different of the log pair $(Y,Z)$, and
\begin{multline*}
S(W^{Y,Z}_{\bullet,\bullet,\bullet};P)=\frac{3}{(-K_X)^3}\int\limits_0^{\infty}\int\limits_0^{\infty}(P(u,v)\cdot Z)^2dv du+\\
+\frac{6}{(-K_X)^3}\int\limits_0^{\infty}\int\limits_0^{\infty}(P(u,v)\cdot Z)\cdot\ord_P(N'_Y(u)|_Z+N(u,v)|_Z)dv du.
\end{multline*}
Moreover, if the inequality is an equality and there exists a prime divisor $E$ over
the threefold such that $C_X(E)=P$ and $\delta_P(X)=\frac{A(E)}{S(E)}$ then $\delta_P(X)=\frac{1}{S_X(Y)}$.
\end{proposition}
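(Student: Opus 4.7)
The strategy is to iterate the Abban--Zhuang refinement along the flag $P \in Z \subset Y \subset X$, improving the estimate of Proposition \ref{Van1}. Given any divisorial valuation $E$ over $X$ with center containing $P$, one filters the graded linear series $\bigoplus_m H^0(X, -mK_X)$ by order of vanishing first along $Y$, then along $Z$, then along $P$. The Okounkov body of this flag-filtered linear series computes $S(E)$ as a triple integral, and the three quantities in the stated minimum arise as the three ``marginal'' $S$-invariants associated to the three strata of the flag.

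I would start by invoking Proposition \ref{Van1} to dispose of the cases where $A(E)/S(E) \geq 1/S_X(Y)$ or $A(E)/S(E) \geq 1/S(V^Y_{\bullet,\bullet};Z)$; the remaining problem reduces to divisors $E$ whose restricted filtration ``sees'' the point $P$ on $Z$. For such $E$, the Zariski decomposition $P(u)|_Y - vZ = P(u,v) + N(u,v)$ gives a two-parameter family of nef divisors on $Y$, and $(P(u,v)\cdot Z)$ is the ``volume'' on $Z$ of the doubly restricted linear series. Since the pair $(Y,Z)$ is plt at $P$ by hypothesis, inversion of adjunction provides a well-defined different $\Delta_P$, so the one-dimensional delta invariant of the log pair $(Z,\Delta_P\cdot P)$ at $P$ equals $(1-\Delta_P)$ divided by the expected vanishing order on $Z$. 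Feeding this into the Abban--Zhuang inequality at the deepest stratum produces the factor $(1-\Delta_P)/S(W^{Y,Z}_{\bullet,\bullet,\bullet};P)$, with the term $\ord_P(N'_Y(u)|_Z+N(u,v)|_Z)$ arising because the restrictions to $Z$ of the negative parts of the two Zariski decompositions still contribute vanishing at $P$ that must be subtracted from the ``honest'' sections in the flag.

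The main technical difficulty is the bookkeeping for the iterated Zariski decompositions and the corresponding flag-refined linear series: one must verify that taking positive parts commutes with restriction up to terms that can be absorbed into $N'_Y(u)$ and $N(u,v)$, and that the integral formula for $S(W^{Y,Z}_{\bullet,\bullet,\bullet};P)$ truly equals the Okounkov-body $S$-invariant (not merely an upper bound). The ``moreover'' clause is a rigidity statement: if equality is attained in the three-term minimum, then the Okounkov body of $E$ must collapse onto the first coordinate of the flag, forcing $E$ to be proportional, as a valuation, to $\ord_Y$, and hence $\frac{A(E)}{S(E)}=\frac{1}{S_X(Y)}$.
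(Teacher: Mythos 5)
This proposition is not proved in the paper at all: it is stated as a quotation of \cite[Theorem~1.7.30]{ChAll} (which in turn rests on the Abban--Zhuang framework of \cite{AZ20}), so there is no internal proof to compare your attempt against. Your outline does identify the correct framework: adjunction for the refined linear series along the flag $P\in Z\subset Y\subset X$, the two Zariski decompositions $P(u),N(u)$ on $X$ and $P(u,v),N(u,v)$ on $Y$, and the different $\Delta_P$ of the plt pair $(Y,Z)$ entering through adjunction to the curve $Z$. As a description of where the three terms in the minimum come from, this is consistent with the cited source.

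As a proof, however, it has a genuine gap: everything that makes the statement nontrivial is deferred rather than carried out. You do not establish the integral formula for $S(W^{Y,Z}_{\bullet,\bullet,\bullet};P)$, nor the bookkeeping showing exactly how the terms $\ord_P(N'_Y(u)|_Z+N(u,v)|_Z)$ enter (you explicitly flag both as ``the main technical difficulty''), and the ``moreover'' clause is argued only heuristically. Moreover, that heuristic is not correct as stated: a prime divisor $E$ with $C_X(E)=P$ cannot be proportional, as a valuation, to $\ord_Y$, whose center is the surface $Y$; the content of the equality case is only that the minimum must then be attained by the term $\frac{1}{S_X(Y)}$, and in \cite{ChAll} and \cite{AZ20} this is extracted from an analysis of when the chain of inequalities in the adjunction argument becomes an equality, not from a collapse of an Okounkov body onto the first flag coordinate. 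A complete argument would have to reproduce that analysis (or simply cite \cite[Theorem~1.7.30]{ChAll}, as the paper does).
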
 

\section{Divisorial stability}
\label{sec-divisorial-stability}
For the reader's convenience, we show that $X$ is divisorially stable, where $X$ is a smooth divisor of type $(1,1,1,1)$ in $(\mathbb{P}^1)^4$. Consider a divisor $-K_X-uY$, where $Y$ is a fiber of $\pi_1\colon X\rightarrow\mathbb{P}^1$, so $Y$ is of type $(1,0,0,0)$. We put $S=S_{2,3,4}$ to be exceptional divisor of the contraction  $\pi_{2,3,4}$.

\begin{proposition}
\label{ZD}
Let $-K_X-uY=P(u)+N(u)$ be the Zariski decomposition. Then
\[
P(u)=
\begin{cases}-K_X-uY,\quad\quad\quad\quad\quad\quad\text{for}\ \ 0\leq u\leq1, \\
-K_X-uY-(u-1)S,\quad\text{for}\ \ 1\leq u\leq2.
\end{cases}
\] where $S$ is the exceptional divisor of the birational morphism $\pi_{2,3,4}\colon X\rightarrow(\mathbb{P}^1)^3$.%\times\mathbb{P}^1\times\mathbb{P}^1$.
\end{proposition}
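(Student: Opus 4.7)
The plan is to translate everything to the type vector description of $\mathrm{Pic}(X)\simeq\mathbb{Z}^4$ and then check directly that the given decomposition satisfies both conditions of a Zariski decomposition. By adjunction,
\[
-K_X\equiv(1,1,1,1),
\]
by assumption $Y=Y_1$ has type $(1,0,0,0)$, and by the computation inside the proof of Proposition~\ref{prop-cones}(a), $S=S_{2,3,4}$ has type $(-1,1,1,1)$. Hence $-K_X-uY$ has type $(1-u,1,1,1)$. The other numerical input I would record is the intersection $S\cdot l_{2,3,4}=-1$, coming from the fact that $S|_S$ is the tautological line bundle of the $\mathbb{P}^1$-bundle structure on the exceptional divisor of the blow-up $\pi_{2,3,4}$ of the smooth curve $C_{2,3,4}$.

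For $u\in[0,1]$ I would simply write
\[
-K_X-uY\equiv(1-u)Y_1+Y_2+Y_3+Y_4,
\]
which is a non-negative combination of the generators $Y_1,Y_2,Y_3,Y_4$ of the nef cone identified in Proposition~\ref{prop-cones}(d), and is therefore already nef. So there is nothing to subtract and the Zariski decomposition is $P(u)=-K_X-uY$, $N(u)=0$.

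For $u\in[1,2]$ the class $-K_X-uY$ is no longer nef (its first coordinate is negative), so I would subtract the boundary divisor $S$ with coefficient $u-1\ge0$. A direct computation of types gives
\[
-K_X-uY-(u-1)S\equiv(0,\,2-u,\,2-u,\,2-u)=(2-u)(Y_2+Y_3+Y_4),
\]
which is again a non-negative combination of nef generators for $u\in[1,2]$, hence nef; and $N(u)=(u-1)S$ is manifestly effective. To confirm that this is the Zariski decomposition (and not just some nef-plus-effective splitting), I would check the orthogonality $P(u)\cdot l=0$ for the curves $l=l_{2,3,4}$ that sweep out the unique component $S\cong C_{2,3,4}\times\mathbb{P}^1$ of $N(u)$. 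Using $-K_X\cdot l=1$, $Y_1\cdot l=1$ from Proposition~\ref{prop-cones}(d), and $S\cdot l=-1$ above, one obtains
\[
P(u)\cdot l=1-u\cdot1-(u-1)(-1)=0,
\]
which prevents any further subtraction from $P(u)$ along $S$. Since at $u=2$ the decomposition degenerates to $P(2)\equiv0$ and $N(2)=S$, this simultaneously identifies $\tau(Y)=2$ as the pseudo-effective threshold.

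The verification is essentially bookkeeping in $\mathrm{Pic}(X)$; the only conceptual obstacle is knowing the explicit generators of the nef and pseudo-effective cones together with the intersection table of $S$ with the extremal curves, all of which is supplied by Proposition~\ref{prop-cones}.
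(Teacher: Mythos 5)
Your proposal is correct and follows essentially the same route as the paper: identify all classes by their types in $\mathrm{Pic}(X)\simeq\mathbb{Z}^4$, observe that $-K_X-uY$ is nef (indeed ample) for $u\le 1$, and for $1\le u\le 2$ that $P(u)=-K_X-uY-(u-1)S\equiv(0,2-u,2-u,2-u)$ is nef and numerically trivial on the rulings of $S$. The only difference is that you spell out the nef-cone generators and the orthogonality check $P(u)\cdot l_{2,3,4}=0$ explicitly, which the paper states in one line; the content is the same.
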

\begin{proof}
Note that $-K_X-uY$ is ample for $0\leq u<1$. Also note that for $1\leq u\leq 2$, we have
\[
-K_X-uY \equiv -K_X-uY-(u-1)S + (u-1)S,
\]
where $P(u) = -K_X-uY-(u-1)S\equiv (0, 2-u, 2-u, 2-u)$ is nef and numerically trivial on the rulings of $S$.
\end{proof}

%For the reader's convenience, we give a proof of the fact that a Fano threefold 4.1 is divisorially stable.

\begin{proposition}[{\cite{Fu16}}]
\label{St1}
%Let $X\subset\mathbb{P}^1\times\mathbb{P}^1\times\mathbb{P}^1\times\mathbb{P}^1$ be a smooth Fano threefold such that $X$ is a divisor of type $(1,1,1,1)$. Then
The variety $X$ is divisorially stable.
\end{proposition}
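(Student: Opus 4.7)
The plan is to invoke Fujita's criterion from \cite{Fu16}, by which divisorial stability of the smooth Fano $X$ reduces to verifying $\beta_X(E) = A_X(E) - S_X(E) > 0$ for every prime divisor $E$ on $X$ whose class spans an extremal ray of the pseudo-effective cone. By Proposition~\ref{prop-cones}(f) those rays are spanned by the four fibers $Y_i$ and the four exceptional divisors $S_{i,j,k}$, and the combinatorial symmetry of the setup inside $(\mathbb{P}^1)^4$ reduces the problem to the two cases $E = Y = Y_1$ and $E = S = S_{2,3,4}$. In both cases $A_X(E) = 1$, so I need to show $S_X(E) < 1$.

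For $E = Y$ the Zariski decomposition of $-K_X - uY$ on $[0, 2]$ is supplied by Proposition~\ref{ZD}, and volumes are read off the triple-intersection formula on $X \subset (\mathbb{P}^1)^4$: a divisor of type $(a_1, a_2, a_3, a_4)$ has cube $6(a_2 a_3 a_4 + a_1 a_3 a_4 + a_1 a_2 a_4 + a_1 a_2 a_3)$. Applied to $P(u)$ of type $(1-u, 1, 1, 1)$ for $u \in [0, 1]$ and of numerical type $(0, 2-u, 2-u, 2-u)$ for $u \in [1, 2]$, this yields $\vol(P(u)) = 6(4-3u)$ and $6(2-u)^3$ respectively. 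Integrating and dividing by $(-K_X)^3 = 24$ gives $S_X(Y) = 11/16$, hence $\beta_X(Y) = 5/16 > 0$.

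For $E = S$ I first determine the Zariski decomposition of $-K_X - vS$. Its class has numerical type $(1+v, 1-v, 1-v, 1-v)$, which by Proposition~\ref{prop-cones}(d) is nef on $[0, 1]$, so $P(v) = -K_X - vS$ on that range. For $v > 1$, any decomposition of $-K_X - vS$ as a non-negative combination of the pseudo-effective generators $Y_i$ and $S_{i,j,k}$ must, after summing the coordinate equations indexed by the last three factors of $(\mathbb{P}^1)^4$, equate a non-negative quantity to $3(1 - v) < 0$, a contradiction. Hence $-K_X - vS$ is not pseudo-effective for $v > 1$, and $\vol(-K_X - vS) = 12(1-v)^2(2+v)$ on $[0, 1]$ and vanishes beyond. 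Integration then produces $S_X(S) = 3/8$, so $\beta_X(S) = 5/8 > 0$.

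The only step that requires any care is pinning down the pseudo-effective threshold of $S$; this rests entirely on the polyhedral description of the effective cone from Proposition~\ref{prop-cones}(f), after which divisorial stability follows by direct computation.
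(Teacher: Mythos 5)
Your two computations are numerically correct: $S_X(Y)=\tfrac{11}{16}=\tfrac{33}{48}$ agrees with the paper, and your determination of the pseudo-effective threshold of $S$ (the coordinate-sum argument showing $-K_X-vS\equiv(1+v,1-v,1-v,1-v)$ is not pseudo-effective for $v>1$) and the value $S_X(S)=\tfrac{3}{8}$ are also right. The weak point is the reduction step. What you call ``Fujita's criterion'' is not the statement you use: \cite[Lemma 9.5, Remark 9.6]{Fu16}, as invoked in the paper, says that prime divisors $D$ with $-K_X-D$ not big may be ignored; it does not say that it suffices to test prime divisors spanning extremal rays of the pseudo-effective cone. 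Divisorial stability requires $\beta_X(D)>0$ for \emph{every} prime divisor $D$ on $X$, and since $S_X(\cdot)$ is not linear in the divisor class, restricting attention to the extremal generators $Y_i$ and $S_{i,j,k}$ needs a justification that your write-up does not supply.

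The step is easily repaired in this geometry: if $D'-D$ is pseudo-effective, then $\vol(-K_X-xD')\le\vol(-K_X-xD)$ for all $x\ge 0$, hence $S_X(D')\le S_X(D)$; and by Proposition~\ref{prop-cones}(f) every prime divisor other than the $S_{i,j,k}$ has type $(a_1,a_2,a_3,a_4)$ with all $a_i\ge 0$ integers, so its class dominates some $Y_i$ and $S_X(D)\le S_X(Y_i)<1$, while the remaining prime divisors are exactly the $S_{i,j,k}$, which you treat directly. With that one observation added, your argument is complete and is a legitimate alternative to the paper's. Note, though, that the paper's route is shorter: under the big-complement criterion of \cite{Fu16}, $-K_X-S\equiv(2,0,0,0)$ is not big, so the divisors $S_{i,j,k}$ are excluded at the outset, and only the single computation for a divisor of type $(1,0,0,0)$ is needed; your computation of $S_X(S)$, while correct, is superfluous under that reduction.
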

\begin{proof}
%we should check that $\eta(D)> 0$ for $D$ is of types $(k_1,k_2,k_3,k_4)$, where $ k_i=0,1$.
Using notation as in Proposition \ref{prop-cones}, we know that the pseudo-effective cone is generated by divisors $S_{i,j,k}$ and $Y_i$.
%\[
%(-1,1,1,1), (1,-1,1,1), (1,1,-1,1), (1,1,1,-1), (1,0,0,0), (0,1,0,0), (0,0,1,0), (0,0,0,1),
%\]
By {\cite[Lemma 9.5, Remark 9.6]{Fu16}} it is enough to consider only the divisors $D$ such that $-K_X-D$ is big. Then, up to permutation of factors on $\mathbb{P}^1$, it is enough to consider only one divisor $D$ of type $(1,0,0,0)$. We compute $\beta_X(D)=1-\frac{S_X(D)}{(-K_X)^3}$. We have
\[
\frac{1}{(-K_X)^3}\int\limits_0^{1}\vol(-K_X-tD)dt=\frac{1}{24}\int\limits_0^1(-K_X-tD)^3dt=\frac{1}{24}\int\limits_0^1(24-18t)dt=\frac{15}{24}.
\]
%Hence, $\eta(D)> 0$.
%We have $Y\sim (1, 0, 0, 0)$, and $C\sim(-1, 1, 1, 1)$.
For $1\leq u\leq 2$, we have $-K_X-uY-(u-1)S\equiv (0, 2-u, 2-u, 2-u)$. Compute
\begin{multline*}
\frac{1}{(-K_X)^3}\int_1^{2}{\mathrm{vol}(-K_X-uY-(u-1)S)du}
= \frac{1}{24}\int_1^{2}{\mathrm{vol}(0, 2-u, 2-u, 2-u)du} \\
= \frac{1}{24}\int_1^{2}{(0, 2-u, 2-u, 2-u)^3(1,1,1,1)du}
%%= \int_1^{2}{(2-u)^3(0,1,1,1)^3(1,1,1,1)du}
= \frac{1}{24}\int_1^{2}{6(2-u)^3du} = \frac{3}{48}.
\end{multline*}

Thus, we obtain $S_X(D)=\frac{33}{48}$, and hence $\beta_X(D) = 1-\frac{33}{48}>0$, hence $X$ is divisorially stable.
\iffalse
Assume that $D$ is of type $(1,1,0,0)$. Then $$\int\limits_0^{+\infty}\vol(-K_X-tD)dt=\int\limits_0^1(-K_X-tD)^3dt=\int\limits_0^1(24-36t+12t^2)dt=10<24.$$ Hence, $\eta(D)> 0$.

Assume that $D$ is of type $(1,1,1,0)$. Then $$\int\limits_0^{+\infty}\vol(-K_X-tD)dt=\int\limits_0^1(-K_X-tD)^3dt=\int\limits_0^1(24-54t+36t^2-6t^3)dt=\frac{15}{2}<24.$$ Hence, $\eta(D)> 0$. Assume that $D$ is of type $(1,1,1,1)$. Then $$\int\limits_0^{+\infty}\vol(-K_X-tD)dt=\int\limits_0^1(-K_X-tD)^3dt=\int\limits_0^1 24(1-t)^3dt=6<24.$$ Hence, $\eta(D)> 0$.
\fi
\end{proof}

%Hence, in our case K-polystability implies K-stability.

\section{Computations}
\label{sec-center-point}
In this section, we work in the following setting. Assume that $X$ is a smooth threefold of type $(1,1,1,1)$ in $(\mathbb{P}^1)^4$, and that $P$ is a point in $X$. Let $E$ be a divisor over $X$ such that its center $C(E)$ contains the point $P\in X$. We consider a fiber $Y\ni P$ of a projection $\pi_i\colon X\to \mathbb{P}^1$. Without loss of generality, we may assume that $i=1$, so $Y\simeq (1, 0, 0, 0)$. We shall estimate beta-invariants $\beta_X(E)$ using Propositions \ref{Van1} and \ref{Van2}. Consider several possibilities.

\subsection{$Y$ is a smooth surface, and there are no $(-1)$-curves on $Y$ that pass through $P$. }
\label{sec-center-point-1}
Let $Z$ be a fiber of the induced contraction
\[
\pi_{1, 2}|_Y\colon Y\to\mathbb{P}^1\times\mathbb{P}^1
\]
that passes through $P$. Then $Z$ is a $0$-curve in $Y$. Assume that $0\leq u\leq 1$. By Proposition \ref{ZD} we have $P(u)=-K_X-uY$. So, for $0\leq v\leq 1$ we have
\[
P(u,v)=(-K_X-Y)|_Y-vZ=-K_Y-vZ.
\]

Then
\[
\int\limits_0^{1}\int\limits_0^{1}(P(u,v)\cdot Z)^2dv du=\int\limits_0^{1}\int\limits_0^{1}4dv du=4.
\]

Assume that $1\leq v\leq 2$. Since there exist exactly two $(-1)$-curves $E_1,E_2$ that meet $Z$, we see that
\[
P(u,v)=-K_Y-vZ-(v-1)(E_1+E_2), \quad \quad N(u, v) = (v-1)(E_1+E_2).
\]

Then
\[
\int\limits_0^{1}\int\limits_1^{2}(P(u,v)\cdot Z)^2dv du=\int\limits_0^{1}\int\limits_1^{2}(4-2v)^2dvdu=\frac{4}{3}.
\]

Note that $P(u,v)=0$ for $v>2$.

\

Assume that $1\leq u\leq 2$. By Proposition \ref{ZD} we have $P(u)=-K_X-uY-(u-1)S$. Put $C'=S|_Y$. Note that $C'$ meets every $(-1)$-curve in $Y$ in one point and $C'^2=6$, $C'\cdot Z=2$. So, $P(u,v)=-K_Y-vZ-(u-1)C'$ for $0\leq v\leq 2-u$. Then
\[
\int\limits_1^{2}\int\limits_0^{2-u}(P(u,v)\cdot Z)^2dv du=\int\limits_1^{2}\int\limits_0^{2-u}(4-2u)^2dvdu=1.
\]

Assume that $2-u\leq v\leq 4-2u$. Then
\[
P(u,v)=-K_Y-vZ-(u-1)C'-(u+v-2)(E_1+E_2), \quad \quad N(u, v) = (u+v-2)(E_1+E_2).
\]

Then
\[
\int\limits_1^{2}\int\limits_{2-u}^{4-2u}(P(u,v)\cdot Z)^2dv du=\int\limits_1^{2}\int\limits_{2-u}^{4-2u}(8-4u-2v)^2dvdu=\frac{1}{3}.
\]

Note that $P(u,v)=0$ for $v>4-2u$. Hence,
\[
\frac{3}{(-K_X)^3}\int\limits_0^{\infty}\int\limits_0^{\infty}(P(u,v)\cdot Z)^2dv du=\frac{1}{8}\left(4+\frac{4}{3}+1+\frac{1}{3}\right)=\frac{5}{6}.
\]

Since, by our assumption, there are no $(-1)$-curves that pass through $P$, so $\ord_P N(u,v)|_Z=~0$. Moreover, $N'_Y(u)|_Z=0$ for $0\leq u\leq 1$ and $\ord_P N(u,v)|_Z\leq u-1$ for $1\leq u\leq 2$. Compute %\color{red}{It seems that $\ord_P N(u,v)|_Z = 0$ since $N(u,v)|_Z = (u+v-2)(E_1+E_2)$, and $E_i$ do not pass through $P$.}\color{black}
\begin{multline*}
\int\limits_0^{\infty}\int\limits_0^{\infty}(P(u,v)\cdot Z)\cdot\ord_P(N'_Y(u)|_Z+N(u,v)|_Z)dv du \\
\leq \int\limits_1^2\int\limits_0^{2-u}(4-2u)(u-1)dvdu+\int\limits_1^2\int\limits_{2-u}^{4-2u}(8-4u-2v)(u-1)dvdu \\
=\frac{1}{6}+\frac{1}{12}=\frac{1}{4}.
\end{multline*}

So,
\[
\frac{6}{(-K_X)^3}\int\limits_0^{\infty}\int\limits_0^{\infty}(P(u,v)\cdot Z)\cdot\ord_P(N'_Y(u)|_Z+N(u,v)|_Z)dv du\leq \frac{1}{16}.
\]

Hence,
\begin{equation}
S(W^{Y,Z}_{\bullet,\bullet,\bullet};P)\leq\frac{5}{6}+\frac{1}{16}=\frac{43}{48}<1.
\end{equation}

Note that $\ord_Z(N(u)|_Y)=0$. So,
\[
S(W^{Y}_{\bullet,\bullet};Z)=\frac{3}{(-K_X)^3}\int\limits_0^{\infty}\int\limits_0^{\infty}\vol(P(u)|_Y-vZ)dv du.
\]

Assume that $0\leq u\leq 1$. Then
\begin{multline*}
\int\limits_0^{\infty}\int\limits_0^{\infty}\vol(P(u)|_Y-vZ)dv du=\int\limits_0^{2}\vol(-K_Y-vZ)dv\\
=\int\limits_0^{1}(-K_Y-vZ)^2dv+\int\limits_1^{2}(-K_Y-vZ-(v-1)(E_1+E_2))^2dv\\
=\int\limits_0^{1}(6-4v)dv+\int\limits_1^{2}(6-4v+2(v-1)^2)dv=4+\frac{2}{3}=\frac{14}{3}.
\end{multline*}

Assume that $1\leq u\leq 2$. Then
\begin{multline*}
\int\limits_0^{\infty}\int\limits_0^{\infty}\vol(P(u)|_Y-vZ)dv du=\int\limits_1^{2}\int\limits_0^{4-2u}\vol(-K_Y-vZ-(u-1)C')dvdu\\
=\int\limits_1^{2}\int\limits_0^{2-u}(-K_Y-vZ-(u-1)C')^2dvdu\\
+\int\limits_1^{2}\int\limits_{2-u}^{4-2u}(-K_Y-vZ-(u-1)C'-(u+v-2)(E_1+E_2))^2dvdu\\
=\int\limits_1^{2}\int\limits_0^{2-u}(6+6(u-1)^2-12(u-1)-4v+4(u-1)v)dvdu\\
+\int\limits_1^{2}\int\limits_{2-u}^{4-2u}(6+6(u-1)^2-12(u-1)-4v+4(u-1)v+2(u+v-2)^2)dvdu \\
=1+\frac{1}{6} = \frac{7}{6}.
\end{multline*}

Then
\begin{equation}
\label{ineq-z-irred-fiber}
S(W^{Y}_{\bullet,\bullet};Z)=\frac{41}{48}<1.
\end{equation}

Hence, $\delta_P(X)>1$.

\subsection{$Y$ is a smooth surface, and there exists a unique $(-1)$-curve $E_1$ on $Y$ that passes through $P$}
\label{sec-center-point-2}
Put $Z=E_1$. Assume that $0\leq u\leq 1$. By Proposition \ref{ZD} we have $P(u)=-K_X-uY$. So, for $0\leq v\leq 1$
\[
P(u,v)=(-K_X-Y)|_Y-vZ=-K_Y-vZ.
\]

Then
\[
\int\limits_0^{1}\int\limits_0^{1}(P(u,v)\cdot Z)^2dv du=\int\limits_0^{1}\int\limits_0^{1}(1+v)^2dv du=\frac{7}{3}.
\]

Assume that $1\leq v\leq 2$. Since there exist exactly two $(-1)$-curves $E_2,E_3$ that meet $Z$, we see that
\[
P(u,v)=-K_Y-vZ-(v-1)(E_2+E_3), \quad \quad N(u, v) = (v-1)(E_2+E_3).
\]

Then
\[
\int\limits_0^{1}\int\limits_1^{2}(P(u,v)\cdot Z)^2dv du=\int\limits_0^{1}\int\limits_1^{2}(3-v)^2dvdu=\frac{7}{3}.
\]

Note that $P(u,v)=0$ for $v>2$. Now assume that $1\leq u\leq 2$. By Proposition \ref{ZD} we have
\[
P(u)=-K_X-uY-(u-1)S, \quad \quad N(u) = (u-1) S.
\]

Put $C'=S|_Y$. Note that $C'$ meets every $(-1)$-curve in $Y$ in one point and $C'^2=6$. So, $P(u,v)=-K_Y-vZ-(u-1)C'$ for $0\leq v\leq 2-u$. Then
\[
\int\limits_1^{2}\int\limits_0^{2-u}(P(u,v)\cdot Z)^2dv du=\int\limits_1^{2}\int\limits_0^{2-u}(2+v-u)^2dvdu=\frac{7}{12}.
\]

Assume that $2-u\leq v\leq 4-2u$. Then
\[
P(u,v)=-K_Y-vZ-(u-1)C'-(u+v-2)(E_2+E_3), \quad \quad N(u, v) = (u+v-2)(E_2+E_3).
\]

Then
\[
\int\limits_1^{2}\int\limits_{2-u}^{4-2u}(P(u,v)\cdot Z)^2dv du=\int\limits_1^{2}\int\limits_{2-u}^{4-2u}(6-3u-v)^2dvdu=\frac{7}{12}.
\]

Note that $P(u,v)=0$ for $v>4-2u$. Hence,
\[
\frac{3}{(-K_X)^3}\int\limits_0^{\infty}\int\limits_0^{\infty}(P(u,v)\cdot Z)^2dv du=\frac{1}{8} \left(\frac{7}{3}+\frac{7}{3}+\frac{7}{12}+\frac{7}{12}\right)=\frac{35}{48}.
\]

Since there exists a unique $(-1)$-curve that passes through $P$, we see that $\ord_P N(u,v)|_Z=0$. Moreover, $N'_Y(u)|_Z=0$ for $0\leq u\leq 1$, $N'_Y(u)|_Z=u-1$ for $1\leq u\leq 2$, hence $\ord_P N'_Y(u)|_Z\leq u-1$ for $1\leq u\leq 2$\color{black}. Then
\begin{multline*}
\int\limits_0^{\infty}\int\limits_0^{\infty}(P(u,v)\cdot Z)\cdot\ord_P(N'_Y(u)|_Z+N(u,v)|_Z)dv du \\
\leq\int\limits_1^2\int\limits_0^{2-u}(2+v-u)(u-1)dvdu+\int\limits_1^2\int\limits_{2-u}^{4-2u}(6-3u-v)(u-1)dvdu \\
=\frac{1}{8}+\frac{1}{8}=\frac{1}{4}.
\end{multline*}

So,
\[
\frac{6}{(-K_X)^3}\int\limits_0^{\infty}\int\limits_0^{\infty}(P(u,v)\cdot Z)\cdot\ord(N'_Y(u)|_Z+N(u,v)|_Z)dv du\leq \frac{1}{16}.
\]

Hence,
\begin{equation}
S(W^{Y,Z}_{\bullet,\bullet,\bullet};P)\leq\frac{35}{48}+\frac{1}{4}=\frac{47}{48}<1.
\end{equation}

Note that $\ord_Z(N(u)|_Y)=0$. So,
\[
S(W^{Y}_{\bullet,\bullet};Z)=\frac{3}{(-K_X)^3}\int\limits_0^{\infty}\int\limits_0^{\infty}\vol(P(u)|_Y-vZ)dv du.
\]

Assume that $0\leq u\leq 1$. Then
\begin{multline*}
\int\limits_0^{\infty}\int\limits_0^{\infty}\vol(P(u)|_Y-vZ)dv du=\int\limits_0^{2}\vol(-K_Y-vZ)dv \\
= \int\limits_0^{1}(-K_Y-vZ)^2dv+\int\limits_1^{2}(-K_Y-vZ-(v-1)(E_2+E_3))^2dv \\
= \int\limits_0^{1}(6-2v-v^2)dv+\int\limits_1^{2}(6-2v-v^2+2(v-1)^2)dv=\frac{14}{3}+\frac{4}{3}=6.
\end{multline*}

Assume that $1\leq u\leq 2$. Then
\begin{multline*}
\int\limits_0^{\infty}\int\limits_0^{\infty}\vol(P(u)|_Y-vZ)dv du=\int\limits_1^{2}\int\limits_0^{4-2u}\vol(-K_Y-vZ-(u-1)C')dvdu \\
=\int\limits_1^{2}\int\limits_0^{2-u}(-K_Y-vZ-(u-1)C')^2dvdu \\
+\int\limits_1^{2}\int\limits_{2-u}^{4-2u}(-K_Y-vZ-(u-1)C'-(u+v-2)(E_2+E_3))^2dvdu \\
=\int\limits_1^{2}\int\limits_0^{2-u}(6+6(u-1)^2-v^2-12(u-1)-2v+2(u-1)v)dvdu \\
+\int\limits_1^{2}\int\limits_{2-u}^{4-2u}(6+6(u-1)^2-v^2-12(u-1)-2v+2(u-1)v+2(u+v-2)^2)dvdu \\
=\frac{7}{6}+\frac{1}{3}=\frac{3}{2}.
\end{multline*}

Then
\begin{equation}
\label{ineq-z-(-1)-curve}
S(W^{Y}_{\bullet,\bullet};Z)=\frac{15}{16}<1.
\end{equation}

Hence, $\delta_P(X)>1$.

\subsection{$Y$ is a smooth surface, and there exist two $(-1)$-curves $E_1, E_2$ on $Y$ that pass through~$P$}
\label{sec-center-point-3}
Put $Z=E_1$. As above,
\[
\frac{3}{(-K_X)^3}\int\limits_0^{\infty}\int\limits_0^{\infty}(P(u,v)\cdot Z)^2dv du=\frac{35}{48}.
\]

Since there are exactly two $(-1)$-curves that pass through $P$, we see that
\[
\ord_P N(u,v)|_Z\leq
\begin{cases}
0, \quad \quad \quad \text{for}\ 0\leq u\leq 1, \ 0\leq v\leq 1 ,\\
v-1, \quad \quad \quad \text{for}\ 0\leq u\leq 1, \ 1\leq v\leq 2, \\
0, \quad \quad \quad \text{for}\ 1\leq u\leq 2, \ 0\leq v\leq 2-u, \\
u+v-2, \quad \quad \quad \text{for}\ 1\leq u\leq 2, \ 2-u\leq v\leq 4-2u. \\
\end{cases}
\]

Moreover, we have
\[
\ord_P N'_Y(u)|_Z\leq
\begin{cases}
0, \quad \quad \quad \text{for}\ 0\leq u\leq 1, \\
u-1, \quad \quad \text{for}\ 1\leq u\leq 2.
\end{cases}
\]

We obtain
\begin{multline*}
\int\limits_0^{\infty}\int\limits_0^{\infty}(P(u,v)\cdot Z)\cdot\ord(N'_Y(u)|_Z+N(u,v)|_Z)dv du \\
\leq \int\limits_0^1\int\limits_1^{2}(3-v)(v-1)dvdu+\int\limits_1^2\int\limits_0^{2-u}(2+v-u)(u-1)dvdu \\
+\int\limits_1^2\int\limits_{2-u}^{4-2u}(6-3u-v)(2u+v-3)dvdu=\frac{2}{3}+\frac{1}{8}+\frac{7}{24}=\frac{13}{12}.
\end{multline*}

So,
\[
\frac{6}{(-K_X)^3}\int\limits_0^{\infty}\int\limits_0^{\infty}(P(u,v)\cdot Z)\cdot\ord(N'_Y(u)|_Z+N(u,v)|_Z)dv du\leq \frac{13}{48}.
\]

Hence,
\begin{equation}
S(W^{Y,Z}_{\bullet,\bullet,\bullet};P)\leq\frac{35}{48}+\frac{13}{48}=1.
\end{equation}

Note that $\ord_Z(N(u)|_Y)=0$. Hence as above we have
\begin{equation}
\label{ineq-z-(-1)-curves}
S(W^{Y}_{\bullet,\bullet};Z)=\frac{3}{(-K_X)^3}\int\limits_0^{\infty}\int\limits_0^{\infty}\vol(P(u)|_Y-vZ)dv du = \frac{15}{16}.
\end{equation} 

We conclude that $\delta_P(X)\geq 1$. We will show that $\beta_X(E)>0$ for any prime divisor $E$ over $X$. Assume that there exists a prime divisor $E$ over $X$ such that $\beta(E)\leq 0$, and the center $C(E)$ on~$X$ contains $P$. Then $\frac{A_X(E)}{S_X(E)}\leq 1$ (and hence $=1$), so $\delta_P(X)= 1$. Consider two cases: $C(E)$ is equal to the point $P$, and $C(E)$ is a curve that contains $P$. In the former case, by Proposition \ref{Van2}, we conclude that $\delta_P(X) = \frac{1}{S_X(Y)}$. But according to Proposition \ref{St1}, we have $S_X(Y) = \frac{33}{48}$, hence we arrive at a contradiction.

In the latter case, consider the curve $C(E)$. %We claim that there exists $i$, for $1\leq i\leq 4$, and a fiber $Y_i$ of the contraction $\pi_i\colon X\to \mathbb{P}^1$, such that $Y_i$ intersects $C(E)$, $Y_i$ is smooth, and
First assume that $C(E)$ belongs to a fiber $Y=Y_i$ of $\pi_i$ for some $i$. 
%Without loss of generality, we may assume that $i=1$. 
Assume that $C(E)$ is a $(-1)$-curve. By Proposition \ref{Sing1} we see that there exists a fiber $Y'=Y_j$ of $\pi_j$ for some $j$ such that $Y'$ contains $C(E)$, and $Y'$ is smooth. Then for a general point $Q\in C(E)\subset Y'$, by the above computations we have $\delta_Q(X)>1$, and hence $\beta_X(E)>0$, which contradicts our assumption.

Assume that $C(E)$ is not a $(-1)$-curve. Then for a general point $Q\in C(E)\subset Y$, we have $\delta_Q(X)>1$, and hence $\beta_X(E)>0$, which contradicts our assumption (see \ref{sec-center-point-4} for case of singular surface~$Y$).

Now assume that $C(E)$ does not belong to a fiber of $\pi_i$ for any $i$. Then, at the general point~$Q$ of $C(E)$, all the fibers of $\pi_i$ passing through $Q$ are smooth, because each $\pi_i$ has only finitely many singular fibers. Now, assume that on all the fibers $Y_i$ passing though $Q$, the point $Q$ lies on the intersection of two $(-1)$-curves. One can note that each $(-1)$-curve that passes through $Q$ belongs to exactly three of the fibers $Y_i$. But then, on each fiber, we have $3$ $(-1)$-curves passing through~$Q$ which is not possible on a smooth sextic del Pezzo surface. This contradiction shows that $C(E)$ cannot be a curve, and hence $\beta_X(E)>0$.

\subsection{$Y$ is a singular surface}
\label{sec-center-point-4}
By Proposition \ref{prop-du-val}, we see that $Y$ has a unique singular point $P'$ of type $\mathbb{A}_1$. By Proposition \ref{Sing1} we may assume that there are no $(-1)$-curves that pass through $P$ and $P\neq P'$. Hence, in the notation of Proposition \ref{Van1}, we have $\Delta_P=0$. By Proposition \ref{Sing2} we may assume that $C' = S|_Y$ does not pass through $P$. Let $Z$ be a fiber of the contraction $\pi_1\colon X\rightarrow\mathbb{P}^1$. Then $Z^2=0$ on $Y$. Since there is no $(-1)$-curves that pass through $P$ and~$C'$ does not pass through $P$, we see that
\[
\frac{6}{(-K_X)^3}\int\limits_0^{\infty}\int\limits_0^{\infty}(P(u,v)\cdot Z)\cdot\ord_P(N'_Y(u)|_Z+N(u,v)|_Z)dv du=0.
\]

Assume that $0\leq u\leq 1$. By Proposition \ref{ZD} we have $P(u)=-K_X-uY$. So, for $0\leq v\leq 1$
\[
P(u,v)=(-K_X-uY)|_Y-vZ=-K_Y-vZ.
\]

Then
\[
\int\limits_0^{1}\int\limits_0^{1}(P(u,v)\cdot Z)^2dv du=\int\limits_0^{1}\int\limits_0^{1}4dv du=4.
\]

Assume that $1\leq v\leq 2$. Since there exists a $(-1)$-curve $E_1$ that meets $Z$. Moreover, $E_1$ passes through $P'$. We see that
\[
P(u,v)=-K_Y-vZ-2(v-1)E_1, \quad \quad N(u, v) = 2(v-1)E_1.
\]

Then
\[
\int\limits_0^{1}\int\limits_1^{2}(P(u,v)\cdot Z)^2dv du=\int\limits_0^{1}\int\limits_1^{2}(4-2v)^2dvdu=\frac{4}{3}.
\]

Note that $P(u,v)=0$ for $v>2$.

Assume that $1\leq u\leq 2$. By Proposition \ref{ZD} we have $P(u)=-K_X-uY-(u-1)S$. So, $P(u,v)=-K_Y-vZ-(u-1)C'$ for $0\leq v\leq 2-u$. Then
\[
\int\limits_1^{2}\int\limits_0^{2-u}(P(u,v)\cdot Z)^2dv du=\int\limits_1^{2}\int\limits_0^{2-u}(4-2u)^2dvdu=1.
\]

Assume that $2-u\leq v\leq 4-2u$. Then
\[
P(u,v)=-K_Y-vZ-(u-1)C'-2(u+v-2)E_1, \quad \quad N(u, v) = 2(u+v-2)E_1.
\]

Then
\[
\int\limits_1^{2}\int\limits_{2-u}^{4-2u}(P(u,v)\cdot Z)^2dv du=\int\limits_1^{2}\int\limits_{2-u}^{4-2u}(8 - 4u - 2v)^2dvdu=\frac{1}{3}.
\]

Note that $P(u,v)=0$ for $v>4-2u$. Then
\begin{equation}
\label{ineq-z-singular}
S(W^{Y, Z}_{\bullet,\bullet, \bullet};P)=\frac{3}{(-K_X)^3}\int\limits_0^{\infty}\int\limits_0^{\infty}(P(u,v)\cdot Z)^2dv du=\frac{1}{8}\left(4+\frac{4}{3}+1+\frac{1}{3}\right)=\frac{5}{6}<1.
\end{equation}

Note that $\ord_Z(N(u)|_Y)=0$. So,
\[
S(W^{Y}_{\bullet,\bullet};Z)=\frac{3}{(-K_X)^3}\int\limits_0^{\infty}\int\limits_0^{\infty}\vol(P(u)|_Y-vZ)dudv.
\]

Assume that $0\leq u\leq 1$. Then
\begin{multline*}
\int\limits_0^{\infty}\int\limits_0^{\infty}\vol(P(u)|_Y-vZ)dudv=\int\limits_0^{2}\vol(-K_Y-vZ)dv \\
= \int\limits_0^{1}(-K_Y-vZ)^2dv+\int\limits_1^{2}(-K_Y-vZ-2(v-1)E_1)^2dv \\
= \int\limits_0^{1}(6-4v)dv+\int\limits_1^{2}(6-4v+2(v-1)^2 )dv=4+\frac{2}{3}=\frac{14}{3}.
\end{multline*}

Assume that $1\leq u\leq 2$. Then
\begin{multline*}
\int\limits_0^{\infty}\int\limits_0^{\infty}\vol(P(u)|_Y-vZ)dudv
=\int\limits_1^{2}\int\limits_0^{4-2u}\vol(-K_Y-vZ-(u-1)C')dvdu \\
=\int\limits_1^{2}\int\limits_0^{2-u}(-K_Y-vZ-(u-1)C')^2dvdu \\
+\int\limits_1^{2}\int\limits_{2-u}^{4-2u}(-K_Y-vZ-(u-1)C'-2(u+v-2)E_1)^2dvdu \\
=\int\limits_1^{2}\int\limits_0^{2-u}(6+6(u-1)^2-12(u-1)-4v+4(u-1)v)dvdu \\
+\int\limits_1^{2}\int\limits_{2-u}^{4-2u}(6+6(u-1)^2-12(u-1)-4v+4(u-1)v+2(u+v-2)^2)dvdu
=1+\frac{1}{6}=\frac{7}{6}.
\end{multline*}

Then
\begin{equation}
S(W^{Y}_{\bullet,\bullet};Z)=\frac{35}{48}<1.
\end{equation}

We conclude that $\delta_P(X)>1$.

\section{Proof of main theorem}
\begin{proof}[Proof of Theorem \ref{main-theorem}]
Let $X$ be a smooth divisor of type $(1,1,1,1)$ in $(\mathbb{P}^1)^4$, so $X$ is a Fano variety of Picard rank $4$ and degree $24$. By Proposition \ref{St1}, $X$ is divisorially stable. Our aim is to show that $X$ is K-stable.

%use Propositions \ref{Van1} and \ref{Van2} to show that $\delta_P(X)>1$ for any point $P\in X$.
%In Section \ref{sec-center-point}
Let $P$ be an arbitrary point in $X$. Let $E$ be a divisor over $X$ whose center on $X$ contains the point~$P$.  We consider a surface $Y=Y_i$ containing~$P$ which is a fiber of $\pi_i$ for some $1\leq i\leq 4$. Then, we distinguish four cases.
%In the first three cases we assume that $Y$ is smooth.
First we analyse the case when there are no $(-1)$-curves on $Y$ that pass through $P$, see section \ref{sec-center-point-1}. Then we deal with the case when there exists a unique $(-1)$-curve passes through $P$, see section \ref{sec-center-point-2}. In both cases we show that $\delta_P(X)>1$, and hence $\beta_X(E)>0$.

After that, we assume that there exist two $(-1)$-curves that pass through $P$, see section \ref{sec-center-point-3}. In this case, we show that $\delta_P(X)\geq 1$. Nevertheless, we manage to show that $\beta_X(E)>0$ for any divisor $E$ over $X$ such that the center of $E$ contains $P$.

Finally, if all four fibers of $\pi_i$, for $1\leq i\leq 4$, passing through $P$ are singular, by Proposition~\ref{prop-du-val} we know that each of them is a sextic del Pezzo surface with exactly one du Val singularity of type~$\mathbb{A}_1$. We consider this case in section~\ref{sec-center-point-4}, and show that $\delta_P(X)>1$, and hence $\beta_X(E)>0$.

This implies that the variety $X$ is K-stable.
%$S(W^{Y}_{\bullet,\bullet};Z)\leq 1$ and $S(W^{Y,Z}_{\bullet,\bullet,\bullet};P)\leq 1$. Using the fact that $S_X(Y)<1$ for any surface $Y$ in $X$ proven in Proposition \ref{St1}, by Proposition \ref{Van1} we conclude $\delta_P(X)>1$.
%Hence $\delta(X)>1$, and the variety $X$ is K-stable.
%In section \ref{sec-center-curve} we assume that $Z$ is a curve. Then there exists a projection, say $\pi_{1, 2}$, such that $\pi_{1,2}(Z)$ is a curve on $\mathbb{P}^1\times\mathbb{P}^1$. We call this curve $Z'$, and we may assume that it has bidegree $(a,b)$ for $a\geq b\geq 0$. Then we consider a birationally ruled surface $Y=\pi_{1,2}^{-1}(Z')$. In section \ref{sec-center-curve-1}, we consider the case when $b=0$. Then in section \ref{sec-center-curve-2} we treat the general case $b>0$.
The theorem is proven.
\end{proof}

\end{document}